\documentclass[11pt]{amsart}

 
 \usepackage{amsopn}
 \usepackage{amsmath,amsthm,amssymb}
 \usepackage[hypertex]{hyperref}

 \textwidth 14.5cm
\textheight 21cm
\oddsidemargin .3in
\evensidemargin .3in

 \newcommand{\nc}{\newcommand}

\nc{\bb}{\mathfrak{b} }
 \nc{\cc}{\mathfrak{c} }  \nc{\dd}{\mathfrak{d} } 
 \newcommand\ee{{\mathfrak e}}   \nc{\ggo}{\mathfrak{g} }
 \nc{\hh}{\mathfrak{h} }  \nc{\ii}{\mathfrak{i} }
 \nc{\jj}{\mathfrak{j} }  \nc{\kk}{\mathfrak{k} }
\nc{\mm}{\mathfrak{m} }   \nc{\nn}{\mathfrak{n} }
\nc{\pp}{\mathfrak{p} }   
\nc{\rr}{\mathfrak{r} } \nc{\sg}{\mathfrak{s} }
 \nc{\sso}{\mathfrak{so} }  \nc{\spg}{\mathfrak{sp} }
 \nc{\ssu}{\mathfrak{su} }  \nc{\ssl}{\mathfrak{sl} }
 \nc{\tog}{\mathfrak{t} }  \nc{\uu}{\mathfrak{u} }
 \nc{\vv}{\mathfrak{v} } \nc{\ww}{\mathfrak{w} }
 \nc{\zz}{\mathfrak{z} }

\nc{\CC}{{\mathbb C}}
 \nc{\DD}{{\mathbb D}}
\nc{\FF}{{\mathbb F}}
\nc{\GG}{{\mathbb G}}  
\nc{\HH}{{\mathbb H}}
\nc{\II}{{\mathbb I}}
\nc{\JJ}{{\mathbb J}}
\nc{\KK}{{\mathbb K}}
\nc{\NN}{{\mathbb N}}

\nc{\RR}{{\mathbb R}}  
 \nc{\ZZ}{{\mathbb Z}}

\nc{\ggob}{\overline{\mathfrak{g}}} 
 
\nc{\glg}{\mathfrak{gl} }
  
\nc{\pca}{\mathcal{P}} \nc{\nca}{\mathcal{N}}
 
 \nc{\vp}{\varphi} \nc{\ddt}{\frac{{\rm d}}{{\rm d}t}}
 \nc{\la}{\langle} \nc{\ra}{\rangle}
 \nc{\brg}{[\,,\,]_{\ggo}}
 \nc{\brv}{[\,,\,]_{\vv}}
 
 \nc{\SO}{{\sf SO}} \nc{\Spe}{{\sf Sp}} \nc{\Sl}{{\sf Sl}}
 \nc{\SU}{{\sf SU}} \nc{\Or}{{\sf O}} \nc{\U}{{\sf U}}
 \nc{\Gl}{{\sf Gl}} \nc{\Se}{{\sf S}} \nc{\Cl}{{\sf Cl}}
 \nc{\Spin}{{\sf Spin}} \nc{\Pin}{{\sf Pin}}

 \nc{\ad}{\operatorname{ad}} \nc{\Ad}{\operatorname{Ad}}
 \nc{\coad}{\operatorname{coad}}
 \nc{\rank}{\operatorname{rank}} \nc{\Irr}{\operatorname{Irr}}
 \nc{\End}{\operatorname{End}} \nc{\Aut}{\operatorname{Aut}}
 \nc{\Inn}{\operatorname{Inn}} \nc{\Der}{\operatorname{Der}}
 \nc{\Ker}{\operatorname{Ker}} \nc{\Iso}{\operatorname{I}}
 \nc{\Le}{\operatorname{L}} \nc{\tr}{\operatorname{tr}}
 \nc{\dif}{\operatorname{d}} \nc{\sen}{\operatorname{sen}}
 \nc{\modu}{\operatorname{mod}} \nc{\Ric}{\operatorname{R}}
 \nc{\Sym}{\operatorname{Sym}} \nc{\sca}{\operatorname{sc}}
 \nc{\scalar}{{\sf s}} \nc{\grad}{\operatorname{grad}}
 \nc{\ricci}{\operatorname{r}} \nc{\riccin}{\operatorname{Ric}}
 \nc{\Lie}{\operatorname{L}} \nc{\ct}{\operatorname{T}}
 
 \theoremstyle{plain}
 \newtheorem{thm}{Theorem}[section]
 \newtheorem{prop}[thm]{Proposition}
 \newtheorem{cor}[thm]{Corollary}
 \newtheorem{lem}[thm]{Lemma}
 
 \theoremstyle{definition}
 \newtheorem{defn}[thm]{Definition}
 
 \theoremstyle{remark}
 \newtheorem*{rem}{Remark}
 
 \newtheorem{exa}[thm]{Example}

 \newcommand{\ri}{{\rm (i)}}
 \newcommand{\rii}{{\rm (ii)}}
 \newcommand{\riii}{{\rm (iii)}}
 
 \newcommand{\rv}{{\rm (v)}}


\begin{document}
\title[Naturally reductive pseudo Riemannian  2-step nilpotent Lie groups]
{Naturally reductive pseudo Riemannian \\ 2-step nilpotent Lie groups}

\author{Gabriela P. Ovando $^1$}
\address{G. P. Ovando: CONICET and ECEN-FCEIA, Universidad Nacional de Rosario 
\\Pellegrini 250, 2000 Rosario, Santa Fe, Argentina}
\footnote{ Currently: Abteilung f\"ur Reine Mathematik, Albert-Ludwigs
Universit\"at 
Freiburg, Eckerstr.1, 79104 Freiburg, Germany.}
\email{gabriela@fceia.unr.edu.ar}


\begin{abstract} This paper  deals with naturally reductive pseudo 
Riemannian  2-step nilpotent Lie groups $(N, \la \,,\,\ra_N)$. In the cases under consideration they are related to bi-invariant metrics. On the one hand, whenever  $\la \,,\, \ra_N$  restricts to a metric in the center it is proved that the simply connected Lie group $N$ arises from a Lie algebra  $\ggo$ and a representation of it. The Lie algebra 
$\ggo$ carries an ad-invariant metric and its corresponding Lie group acts as a group of  isometries of $(N, \la \,,\,\ra)$ fixing the identity element.  On the other hand,  a bi-invariant metric $\la\,,\,\ra$ on $N$ provides another family of examples of naturally reductive spaces, namely those of the form $(N/\Gamma, \la\,,\,\ra)$ being $\Gamma\subset N$ a  lattice, which are also investigated. \end{abstract}

\thanks{{\it (2010) Mathematics Subject Classification}:  53C50 22E25 53B30
53C30. }

\maketitle

\section{Introduction}


The 2-step nilpotent groups are nonabelian and from the algebraic point of
view as close as possible to be Abelian and  they evidence a rich geometry when
equipped with a metric tensor. While they have been extensively investigated
  in the Riemannian situation, in the case of indefinite metrics, there 
  are significant advances as showed in \cite{Bo, C-P1, C-P2, Ge, 
J-P-L,J-P,J-P-P,Pa} but
there are still several open problems. A first obstacle appears when
trying to traduce the left invariant metric to the Lie algebra level. So far
all attempts in this direction take as starting point  the Riemannian 
model. Among these pseudo Riemannian spaces, the {\em naturally reductive} ones  are endowed with nice simple algebraic and geometric
properties. Examples of them are provided by 2-step nilpotent Lie groups carrying a bi-invariant metric.  
 
  Important  
studies  concerning the structure of a  naturally reductive Riemannian Lie group $G$ when $G$ is compact and simple or when $G$ is non compact and 
semisimple were given  by D'Atri-Ziller \cite{DA-Z} and Gordon \cite{Go} respectively. Gordon showed that every naturally reductive Riemannian manifold may be realized as a
homogeneous space $G/H$ with Lie group $G$ of the form $G=G_{nc}G_cN$ where $G_{nc}$ is
a non compact semisimple normal subgroup, $G_c$ is compact semisimple and $N$ is the
nilradical of $G$.  Furthermore $N\cap H=\{0\}$ and the induced metrics on each of
$G_{nc}/(G_{nc}\cap H)$, $G_c/(G_c \cap H)$ and $N (=N/(N\cap H))$ are naturally
reductive so that the study of naturally reductive metrics is partially reduced to
the cases in which $G$ is semisimple either of compact or non compact type or $G$ is
nilpotent. In the last case Gordon proved that $G$ must be at most 2-step nilpotent. 
 Lauret \cite{La} exploited this result to afford a classification of naturally reductive Riemannian
connected simply connected nilmanifolds. According to Wilson \cite{Wi} such a
manifold  can be realized as a 2-step nilpotent Lie group equipped with a left
invariant metric. 

Later Tricerri and Vanhecke \cite{T-V2} proved that a Riemannian manifold is 
a naturally reductive homogeneous space if and only if there exists a
homogeneous structure $T$ satisfying $T_x x=0$ for all tangent vectors $x$, offering
in this way an infinitesimal description of these reductive manifolds.
The notion of {\em homogeneous structure} was introduced by 
Ambrose and Singer \cite{A-S} to characterize connected simply connected and 
complete homogeneous Riemannian manifolds. In the Riemannian case every homogeneous 
manifold is complete and reductive.  
More recently Gadea and Oubi\~na \cite{G-O1} proved that a
connected simply connected and complete pseudo Riemannian manifold admits a
pseudo-Riemannian structure if and only if it is reductive homogeneous. While  Tricerri and Vanhecke \cite{T-V1} achieved the 
classification of  homogeneous  Riemannian structures, 
in the pseudo Riemannian case, a complete classification is still a pending
item.  

However Calvaruso and
Marinosci \cite{Ca,C-M1, C-M2} studied homogeneous structures  in dimension three, obtaining with their results the  naturally reductive  Lie groups with a left
invariant Lorent\-zian metric. In particular the  Heisenberg Lie group admits two
naturally reductive left invariant Lorent\-zian metrics (and for which the center 
 is non degenerate). 
 
In this paper we provide  constructions for  naturally reductive
pseudo Riemannian 2-step nilpotent Lie groups. By following a similar  approach to that one of  Gordon, one gets  necessary and sufficient conditions to have naturally reductive pseudo Riemannian 2-step nilpotent Lie groups with non degenerate center -Theorem (\ref{t1})-. This enables to attach this kind of Lie groups to
Lie algebras endowed with an ad-invariant metric and to certain kind of 
representations of them:  

\vskip 2pt

{\bf Theorem \ref{t2}} {\em Let $\ggo$ denote a Lie algebra carrying  an 
ad-invariant metric $\la\,,\,\ra_{\ggo}$ and let $(\pi, \vv)$ be a real 
faithful representation of $\ggo$ without trivial subrepresentations and 
 such that the metric on $\vv$, $\la\,,\,\ra_{\vv}$ is $\pi(\ggo)$-invariant. Let $\nn$
denote the  Lie algebra
 $\nn=\ggo \oplus \vv$ whose the Lie bracket is given  by
 $$\begin{array}{rcl}
  [\ggo,\ggo]_{\nn}=[\ggo, \vv]_{\nn} =0 & [\vv, \vv] \subseteq \ggo \\ \\
\la [u,v], x\ra_{\ggo} = \la \pi(x) u, v\ra_{\vv}& \mbox{ for all  } x\in \ggo,
\, \forall u, v\in
\vv, 
\end{array}
$$
equip $\nn$ with the metric 
 $\la\,,\,\ra$ 
 $$ \la\,,\,\ra_{\ggo\times \ggo}= \la\,,\,\ra_{\ggo}\qquad 
 \la\,,\,\ra_{\vv\times \vv}= \la\,,\,\ra_{\vv}\qquad \la \ggo, \vv\ra=0$$
 then the corresponding simply connected 2-step nilpotent Lie group $(N, \la\,,\,\ra)$, being $\la\,,\,\ra$ 
  the left invariant metric
 induced by $\la\,,\,\ra$ above,  is a naturally reductive pseudo Riemannian
 space.  
 
The converse holds whenever the center of $\nn$ is non degenerate and $j$ (defined
as in (\ref{br})) is faithful. }

\vskip 2pt

 The previous result empores the
understanding of some geometrical features such as the isometry group
-Proposition 3.5- and it sets up the construction of new examples, in particular by describing some naturally reductive metrics in the Heisenberg Lie group $H_{2n+1}$.

We also bring into consideration 2-step nilpotent Lie groups furnished  
with a bi-invariant metric in order to check geometrical and 
algebraic structure differences between  metrics for which the center is either  degenerate or either non degenerate. 
In fact bi-invariant metrics offer examples of  flat pseudo Riemannian metrics for which the e  
isometry group contains the group of orthogonal automorphisms as a proper 
subgroup.  Another application of bi-invariant metrics promotes the construction of
pseudo Riemannian naturally reductive compact spaces.

 \section{On 2-step nilpotent Lie groups with a left invariant pseudo Riemannian
 metric}
 
 In this section we show    suitable decompositions of the Lie algebra corresponding to a 2-step nilpotent
 Lie group equipped with a left invariant pseudo Riemannian metric. We are
 mainly interested here in those metrics for which the center is non degenerate, a fact that determines unambiguously the decomposition. 
 
 A {\em metric}  on a real vector space $\vv$ is a non 
 degenerate symmetric bilinear  form $\la\,,\,\ra:\vv \times \vv \to \RR$. 
 Whenever  $\vv$ is the Lie algebra of a given Lie group $G$, by identifying 
 $\vv$ with the set of left invariant vector fields on $G$, the metric 
 induces by mean of the left translations, a pseudo
  Riemannian metric tensor on
 the corresponding Lie group. Conversely any left invariant pseudo
 Riemannian metric on $G$ is completely determined by its value at the identity
 tangent space $T_eG$.
  
  Let  $(N, \la\,,\,\ra)$ denotes a 2-step nilpotent Lie group
endowed with a left invariant pseudo Riemannian metric. There exist several
ways to describe the structure of the corresponding
Lie algebra $\nn$. The main difficult yields on the existence of degenerate
subspaces as one can see below. 

\vskip 3pt

{\bf (a)}  If the center is degenerate,
the null subspace is defined uniquely as
$$\uu=\{x\in \zz \, \mbox{ such that }\, \la x, z\ra=0 \quad \forall z\in \zz\}$$
and therefore the center of $\nn$ decomposes as a direct sum of vector subspaces 
$$\zz =\uu \oplus \tilde{\zz}$$
where $\tilde{\zz}$ is a complementary subspace of $\uu$ in $\zz$ and it 
is easy
to prove that the restriction of the metric to $\tilde{\zz}$ is non 
degenerate. 
Moreover it is possible to find a isotropic subspace $\vv\subset \nn$ such that $\vv \cap
\zz=\{0\}$ and the metric on $\uu\oplus \vv$ is non degenerate. This subspace
$\vv$ is not well defined invariantly but once $\vv$ is fixed, one can take
$\tilde{\zz}$ as the portion of the center in $(\uu\oplus\vv)^{\perp}$ and to
complete the decomposition of $\nn$ as a orthogonal direct sum 
\begin{equation}\label{ortsum}
\nn=(\uu \oplus \vv) \oplus (\tilde{\zz}\oplus \tilde{\vv})
\end{equation}
in such a way that $(\uu\oplus \vv)^{\perp}=\tilde{\zz}\oplus \tilde{\vv}$ 
and (\ref{ortsum}) is a Witt decomposition. Note that $\tilde{\vv}$ is non degenerate.
Moreover  it is  possible to define a lineal map $j:\zz \to \End(\vv \oplus
\tilde{\vv})$  which
play a similar role to that one in the Riemannian case (see \cite{C-P1} for details).

In the last section of the present work, we show similar results for the
case of bi-invariant metrics. 
  
  \vskip 3pt
  
  {\bf (b)} Let $e_1, \hdots, e_p$ denote  a
basis of $\zz$. For any $u, v  \in \nn$, the Lie bracket can be 
written
 $$[u, v] =  \sum_{i=1}^p \la J_i u, v\ra e_i,
$$
where $J_i:\nn \to \nn$  are self adjoint endomorphisms with respect 
to $\la \,,\,\ra$ and $\zz=\cap_{i=1}^p ker J_i$. In fact, 
$[u,v]=\sum \omega_i(u,v) e_i$ where $\omega_i:\nn \times \nn \to \RR$ for
$i=1, \hdots p$, is a 
familly of skew symmetric bilinear 2-forms which represents the coordinates
of $[u,v]$ with respect to the fixed basis. 
Since the metric on $\nn$ is non degenerate, for every i there exists a endomorphism $J_i:\nn \to
\nn$ such that  $\omega_i(u,v)=<J_iu,v>$. 
  The endomorphisms $J_i$ are thus called the {\em structure endomorphisms}
    associated to $e_1 , . . . , e_p$ (see \cite{Bo}).

 \vskip 3pt

 Examples of pseudo
Riemannian 2-step nilpotent Lie groups  $N$ arise by considering the simply
connected Lie groups whose  Lie algebra can be constructed 
as follows.  
Let $(\zz, \la \,,\,\ra_{\zz})$ and $(\vv, \la \,,\,\ra_{\vv})$ denote vector
spaces endowed with (non necessarly definite) metrics. Let $\nn$ denote the
 direct sum as vector spaces
\begin{equation}\label{des2}
\nn=\zz \oplus \vv \qquad \quad\mbox{ direct sum }
\end{equation}
and let $\la\,,\,\ra$ denote the metric given by
\begin{equation}\label{met}
\la \,,\,\ra_{|_{\zz \times \zz}}=\la \,,\,\ra_{\zz}\qquad 
\la \,,\,\ra_{|_{\vv \times \vv}}=\la \,,\,\ra_{\vv} \qquad
\la \zz, \vv \ra=0.
\end{equation}

Let $j:\zz \to \End(\vv)$ be a linear map such that $j(z)$ is self adjoint with respect to
$\la \,,\,\ra_{\vv}$ for all $z\in \zz$.  Then $\nn$
 becomes a 2-step nilpotent Lie algebra if one defines a Lie bracket by
\begin{equation}\label{br}
\begin{array}{rcl}
[x,y] & = & 0 \quad \mbox{ for all }x\in \zz, y\in \nn\\
\la [u,v], x\ra & = & \la j(x) u,v\ra \qquad \mbox{ for } x\in \zz, u,v\in \vv.
\end{array}
\end{equation}

Conversely, let $\nn$ denote a 2-step nilpotent Lie algebra furnished with a metric
for which the center is non degenerate. Then $\nn$ can be decomposed into a
orthogonal direct sum  as in (\ref{des2}) being $\vv:=\zz^{\perp}$ and the 
Lie bracket on $\nn$ induces  self adjoint linear maps 
$j(x)$ for $x\in \zz$ given
by (\ref{br}).

\begin{prop} \label{p1} Let $(N,\la\,,\,\ra)$ denote a  simply connected 2-step nilpotent Lie group equipped with a left invariant pseudo Riemannian metric. If the center of $N$ is non
degenerate then its Lie algebra $\nn$ admits a orthogonal decomposition as in 
(\ref{des2}) and the corresponding Lie bracket can be obtained by (\ref{br}).
\end{prop}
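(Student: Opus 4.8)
The plan is to let the hypothesis on the center carry all the structural weight and then simply dualize the bracket through the metric. Write $\zz$ for the center of $\nn$. Since $\la\,,\,\ra$ restricts to a non-degenerate form on $\zz$, one has $\zz\cap\zz^{\perp}=\{0\}$, and a dimension count yields the orthogonal direct sum $\nn=\zz\oplus\zz^{\perp}$; I then set $\vv:=\zz^{\perp}$. This is precisely the decomposition (\ref{des2}), with the metric splitting as in (\ref{met}) by the very definition of the orthogonal complement, and the restriction $\la\,,\,\ra_{\vv}$ is again non-degenerate, since its radical would sit inside $\zz\cap\zz^{\perp}=\{0\}$. I would emphasize that this first step is the only genuinely pseudo-Riemannian point: in indefinite signature a subspace and its orthogonal complement need not be complementary, and it is exactly the non-degeneracy assumption on $\zz$ that rules out the degenerate phenomena treated in case (a) through the Witt decomposition (\ref{ortsum}).

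Next I would recover the bracket. Because $\nn$ is $2$-step nilpotent, its derived ideal is central, so $[\nn,\nn]\subseteq\zz$; and since $\zz$ is the center, $[x,y]=0$ for every $x\in\zz$ and $y\in\nn$, which is the first line of (\ref{br}). It remains to describe $[u,v]$ for $u,v\in\vv$. Such a bracket lies in $\zz$, hence is determined by its $\la\,,\,\ra_{\zz}$-products against all $x\in\zz$. For fixed $x\in\zz$ and $u\in\vv$, the assignment $v\mapsto\la[u,v],x\ra$ is a linear functional on $\vv$, so by non-degeneracy of $\la\,,\,\ra_{\vv}$ there is a unique vector, which I name $j(x)u\in\vv$, satisfying $\la j(x)u,v\ra=\la[u,v],x\ra$ for all $v\in\vv$. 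This is the defining relation of (\ref{br}).

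Finally I would verify that $j$ has the asserted properties and that (\ref{br}) reproduces the given bracket. Bilinearity of $[\,,\,]$ and of the metric show that $j(x)u$ is linear in both $x$ and $u$, so $j:\zz\to\End(\vv)$ is a well-defined linear map; moreover the relation $\la j(x)u,v\ra=\la[u,v],x\ra$ combined with the symmetry of the metric and the skew-symmetry of the bracket fixes the adjoint behaviour of $j(x)$ recorded in (\ref{br}). That the reconstructed bracket equals the original one is then immediate: both vanish on $\zz\times\nn$, and for $u,v\in\vv$ their $\zz$-components coincide by construction while their $\vv$-components both vanish, since $[u,v]\in\zz=\vv^{\perp}$. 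Hence $j$ recovers $(\nn,[\,,\,],\la\,,\,\ra)$ exactly.

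The only step demanding any care is the orthogonal splitting, and even there the argument is purely linear-algebraic once non-degeneracy of $\zz$ is secured; the remainder is the standard identification of a central-valued alternating form with a family of metric-adjoint endomorphisms. I therefore anticipate no serious obstacle, the real content of the proposition being the observation that the non-degenerate-center hypothesis is exactly what makes the clean model (\ref{des2})--(\ref{br}) available.
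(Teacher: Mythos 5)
Your proposal is correct and follows essentially the same route as the paper, which likewise sets $\vv:=\zz^{\perp}$, obtains the orthogonal splitting from non-degeneracy of the restriction of the metric to $\zz$, and defines $j(x)$ by dualizing the central-valued bracket through the non-degenerate form on $\vv$; your write-up merely spells out the linear-algebra details the paper leaves implicit. The only point worth flagging is terminological: the relation $\la j(x)u,v\ra=\la[u,v],x\ra$ together with skew-symmetry of the bracket makes $j(x)$ skew-adjoint in the usual sense, which is what the paper calls ``self adjoint'' and places in $\sso(\vv,\la\,,\,\ra_{\vv})$.
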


This  includes the Riemannian case, that is,  when the metric $\la\,,\,\ra$ is
positive definite. 
In this situation, the inner product $\la\,,\,\ra_+$ produces  a decomposition of the center of the Lie algebra $\nn$   as a  orthogonal direct sum as vector spaces 
$$\zz=\ker j \oplus C(\nn)$$
and moreover $j$ is injective if and only if there is no Euclidean factor in the
De Rahm decomposition  of the simply connected Lie group $(N, \la\,,\,\ra_+)$ (see
\cite{Go}). This does not necessarly hold in the pseudo Riemannian case. 
 Below we show an example of a Lorentzian metric on a 2-step nilpotent
Lie algebra $\nn$, where the center is non degenerate and  such that 
$ker(j)=[\nn,\nn]$, so that a splitting as above is not possible.

\begin{exa} \label{exa1} Let $\RR \times \hh_3$ be the 2-step nilpotent Lie algebra spanned by
the vectors $e_1,e_2, e_3, e_4$ with the Lie bracket $[e_1, e_2]=e_3$. Define a
metric where the non trivial relations are
$$\la e_1, e_1\ra=\la e_2, e_2\ra=\la e_3, e_4\ra=1.$$
After (\ref{br}) one can verify that $j(e_3)\equiv 0$, while 
$$j(e_4)=\left(\begin{matrix}
0 & -1\\
1 & 0 \end{matrix} \right)
$$
Notice that $e_4\notin C(\RR \times \hh_3)$ and $ker j=\RR e_3=C(\RR\times
\hh_3)$, that is $ker j=C(\nn)$.
\end{exa}

 Let $\Or(\vv, \la\,,\,\ra_{\vv})$  denote the group of linear maps on $\vv$ which are isometries for
 $\la\,,\,\ra_{\vv}$ and whose Lie algebra $\sso(\vv,\la\,,\,\ra_{\vv})$ is the 
 set  of  linear maps on $\vv$ that are self adjoint  with respect to 
 $\la \,,\,\ra_{\vv}$. The next goal is to describe the group of isometries
 which plays an important role in the next section. Start with the next 
 result proved in \cite{C-P1}.

\begin{prop} \label{icp} Let $N$ denote a 2-step nilpotent Lie group endowed with a left invariant pseudo Riemannian metric, with respect to which the center is non degenerate. Then the group of isometries fixing the identity coincides with the group of orthogonal automorphisms of $N$.
\end{prop}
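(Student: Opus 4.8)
The plan is to prove the nontrivial inclusion, that every isometry fixing the identity is an orthogonal automorphism; the reverse inclusion is immediate, since an orthogonal automorphism of $N$ is by definition an automorphism preserving $\la\,,\,\ra$ and is therefore an isometry fixing $e$. So let $\phi$ be an isometry of $(N,\la\,,\,\ra)$ with $\phi(e)=e$ and set $A=d\phi_e$, a linear map on $T_eN=\nn$ lying in $\Or(\nn,\la\,,\,\ra)$. I would first invoke rigidity: a left invariant metric is real analytic and $N$ is connected, so an isometry is determined by its value and its differential at a single point (the pseudo Riemannian analogue of the Myers--Steenrod principle). Consequently it is enough to show that $A$ is a Lie algebra automorphism of $\nn$. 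Indeed, an orthogonal $A\in\Aut(\nn)$ integrates, $N$ being simply connected, to an orthogonal automorphism $\hat A$ of $N$ fixing $e$ with $d\hat A_e=A=d\phi_e$, whence $\phi=\hat A$ by rigidity, and $\phi$ is an orthogonal automorphism.

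To show that $A$ respects the bracket, I would use that every isometry preserves the Levi-Civita connection and the curvature tensor together with all its covariant derivatives. By Proposition \ref{p1} we may write $\nn=\zz\oplus\vv$ as an orthogonal direct sum with $\vv=\zz^{\perp}$ and encode the bracket through the self adjoint maps $j(z)$ of (\ref{br}). The Levi-Civita connection of the left invariant metric admits an explicit form at $e$ in terms of $j$, and the curvature $R$ at $e$ (as well as $\nabla R$, $\nabla^2R,\dots$) is a polynomial expression in the $j(z)$. Since $\phi$ fixes $e$, the map $A$ intertwines all of these tensors, i.e. $A\,R(X,Y)W=R(AX,AY)(AW)$ and similarly for the higher derivatives. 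The decisive step is to read off from this curvature data an invariant characterization of the two summands, so as to conclude $A\zz=\zz$ and $A\vv=\vv$.

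Once the splitting is preserved, the argument closes quickly. Comparing $\la[u,v],z\ra=\la j(z)u,v\ra$ with the intertwining of the curvature forces the relation $A\,j(z)=j(Az)\,A$ on $\vv$ for every $z\in\zz$; writing $A_{\vv}=A|_{\vv}$ and $A_{\zz}=A|_{\zz}$, this says precisely that $A[u,v]=[Au,Av]$ for $u,v\in\vv$, while the bracket vanishes on all remaining pairs. Hence $A\in\Aut(\nn)$, and the conclusion follows from the first paragraph.

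The main obstacle is the middle step, namely showing that $A$ preserves the decomposition $\nn=\zz\oplus\vv$. In the Riemannian case one separates the center from its complement using positive definiteness, for instance through the sign of the Ricci or sectional curvatures; in indefinite signature these positivity arguments break down and isotropic vectors could a priori mix $\zz$ and $\vv$. This is exactly where the hypothesis that the center be non degenerate is essential: it guarantees the clean orthogonal splitting of Proposition \ref{p1} and permits a purely curvature-theoretic description of $\zz$ that any curvature-preserving orthogonal $A$ must respect. I expect the delicate point to be verifying that this description remains faithful when $j$ is not injective, as happens in Example \ref{exa1}, since then $\zz$ is strictly larger than the image of the curvature operator and the characterization must be set up to detect the extra central directions as well.
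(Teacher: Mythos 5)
The paper does not actually prove this proposition: it is quoted from Cordero--Parker \cite{C-P1}, and the only thing the paper adds is the explicit formulas for $\nabla$, $R$ and the Ricci transformation in Section 4, with the remark that ``the formulas above were used in \cite{C-P1} to prove (\ref{icp})''. So your overall frame --- reduce by pseudo-Riemannian Myers--Steenrod rigidity to showing that $A=d\phi_e$ is a Lie algebra automorphism, then extract this from the fact that $A$ preserves the curvature tensor built out of the maps $j(z)$ --- is indeed the route the cited proof takes. But as written your text is a plan, not a proof: the step you yourself call ``the main obstacle'', namely an invariant characterization of $\zz$ forcing $A\zz=\zz$ and $A\vv=\vv$, is never supplied, and that step is essentially the entire content of the proposition in indefinite signature. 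In the Riemannian case one gets the splitting for free from the sign of the Ricci transformation ($T\le 0$ on $\zz$ and $\ge 0$ on $\vv$, or vice versa depending on conventions), and the natural fallback of using $\ker T$ or $\operatorname{im}T$ also fails here: in Example \ref{exa1} the map $j$ is not injective, the metric on $\zz$ is neutral, and $T$ restricted to $\zz$ is nilpotent with $T(e_3)=0$ and $T(e_4)\in\RR e_3$, so neither sign nor rank data cleanly isolates $\zz$ from $\vv$. Until you exhibit a tensorial quantity (built from $R$ and its covariant derivatives, or from the full family $j(z)$) whose kernel or image is exactly $\zz$, the ``decisive step'' and hence the conclusion $A_{\vv}\,j(z)\,A_{\vv}^{-1}=j(A_{\zz}z)$ remain unjustified; note also that this last relation itself is equivalent to $A\in\Aut(\nn)$ only after the splitting is known to be preserved, so the argument cannot be rearranged to avoid the gap.

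Two smaller points. First, the statement does not assume $N$ simply connected, while your integration of $A\in\Aut(\nn)\cap\Or(\nn,\la\,,\,\ra)$ to an automorphism of $N$ does; for non-simply-connected $N$ you must pass to the universal cover and check that the resulting automorphism preserves the deck group before descending. Second, the intertwining identity you want is $\la j(z)Au,Av\ra=\la j(A^{-1}z)u,v\ra$, i.e.\ $A_{\vv}\,j(w)\,A_{\vv}^{-1}=j(A_{\zz}w)$ with $A_{\zz}$ orthogonal on $\zz$; this uses the non-degeneracy of the center to identify $[u,v]$ with the collection of numbers $\la [u,v],z\ra$, which is fine, but it is another place where the hypothesis enters and deserves to be said explicitly rather than folded into ``comparing with the intertwining of the curvature''.
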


Denote by $H$ the group of orthogonal automorphisms and by  $N$ also  the 
subgroup of isometries consisting of left translations by elements of $N$.
 Consider the  isometries of the form $h n$ where $h\in H$ and $n\in N$, and denote it by $I_a(N)$. Then
$N$ is a normal subgroup of $I_a(N)$, $N\cap H=\{e\}$ and therefore  after (\ref{icp}) one has
$$I(N) = I_a(N)= H \ltimes N.$$

Whenever  $(N, \la\,,\,\ra)$ is  simply connected,   we do not distinguish between the group of automorphisms of
$N$ and of $\nn$. Thus one obtains that the group  $H$ is given by
\begin{equation}\label{oa}
H=\{(\phi, T)\in \Or(\zz, \la\,,\,\ra_{\zz}) \times \Or(\vv, \la\,,\,\ra_{\vv}):
Tj(x)T^{-1}=j(\phi x), \quad x\in \zz\}
\end{equation}
while its  Lie algebra $\hh=\Der(\nn)\cap \sso(\nn,\la\,,\,\ra)$ is 
\begin{equation}\label{sd}
\hh=\{(A,B)\in \sso(\zz,\la\,,\,\ra_{\zz}) \times \sso(\vv,\la\,,\,\ra_{\vv}):
[B,j(x)]=j(Ax),\quad x\in \zz\}.
\end{equation}
In fact, let $\psi$ denote an orthogonal automorphism of $(\nn, \la\,,\,\ra)$.
As automorphism $\psi(\zz)\subseteq \zz$ and since the decomposition 
$$ \nn = \zz \oplus \vv$$
is orthogonal then $\psi(\vv)\subseteq \vv$. Set $\phi:=\psi_{|_{\zz}}$ and
$T:=\psi_{|_{\vv}}$, thus 
$(\phi, T)\in \Or(\zz, \la\,,\,\ra_{\zz})\times \Or(\vv,\la\,,\,\ra_{\vv})$ such that
$$\begin{array}{rcl}
\la \phi^{-1}[u,v], x\ra &=&  \la [Tu,Tv],j(x) \ra \quad \mbox{if and only if}\\
\la j(\phi x) u, v \ra & = & \la j(x)Tu, Tv \ra
\end{array}
$$ 
which implies (\ref{oa}). By derivating (\ref{oa}) one gets (\ref{sd}).

\begin{prop}  Let $N$ denote a simply connected 2-step nilpotent Lie group endowed with a left 
invariant pseudo Riemannian metric, with respect to which the center is non 
degenerate. Then the group of isometries is
$$I(N) = H \ltimes N.$$
where $N$ denotes the set of left translations by elements of $N$ and $H$ 
the isotropy subgroup  is given by
(\ref{oa}) with  Lie algebra as in  (\ref{sd}).
\end{prop}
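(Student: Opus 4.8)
The plan is to assemble the isometry group from the two families already in hand: the left translations, which form the subgroup $N$, and the isometries fixing the identity, which Proposition~\ref{icp} identifies with the group $H$ of orthogonal automorphisms. Since the metric is left invariant, every left translation $L_n$ is an isometry, so $N$ acts transitively on itself. First I would establish the product decomposition: given any isometry $f\in I(N)$, the map $L_{f(e)^{-1}}\circ f$ fixes the identity $e$ and hence lies in the isotropy subgroup at $e$, which by Proposition~\ref{icp} equals $H$. Setting $h:=L_{f(e)^{-1}}\circ f\in H$ gives $f=L_{f(e)}\circ h$, so $I(N)=N\cdot H=H\cdot N$ as an equality of sets, not merely a containment.

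Next I would upgrade this product to a semidirect product with $N$ normal. The decisive input is again Proposition~\ref{icp}: every element of $H$ is an \emph{automorphism} of $N$, and an automorphism $\psi$ conjugates a left translation to a left translation, $\psi\circ L_n\circ\psi^{-1}=L_{\psi(n)}$. Thus $H$ normalizes $N$; since $N$ normalizes itself, $N$ is normal in $I(N)=H\cdot N$. For the intersection, a left translation $L_n$ fixes $e$ exactly when $n=e$, so the only left translation lying in the isotropy subgroup $H$ is the identity, i.e. $N\cap H=\{e\}$. Combining the product decomposition with normality and triviality of the intersection yields $I(N)=H\ltimes N$.

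Finally I would record the explicit descriptions (\ref{oa}) and (\ref{sd}), which is precisely the computation displayed just before the statement. Because $N$ is simply connected, automorphisms of $N$ are identified with automorphisms of $\nn$, which makes the Lie-algebra description available. An orthogonal automorphism $\psi$ preserves the center $\zz$ (being an automorphism) and therefore preserves $\vv=\zz^{\perp}$ (being orthogonal), so it splits as $(\phi,T)$ with $\phi=\psi|_{\zz}$ and $T=\psi|_{\vv}$; translating the requirement that $\psi$ preserve the bracket (\ref{br}) into $\la j(\phi x)u,v\ra=\la j(x)Tu,Tv\ra$ gives the intertwining relation $Tj(x)T^{-1}=j(\phi x)$, which is (\ref{oa}). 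Differentiating this one-parameter relation produces $[B,j(x)]=j(Ax)$, which is (\ref{sd}).

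The only genuinely delicate point is the identification of the isotropy subgroup with the orthogonal automorphisms, and this is exactly the content of Proposition~\ref{icp}, which we are entitled to invoke; in the pseudo Riemannian setting this is not automatic, since one cannot appeal directly to completeness or positivity as in the Riemannian case. Granting that result, the remaining work is the standard semidirect-product bookkeeping --- transitivity, normality of $N$ via the automorphism property, and triviality of $N\cap H$ --- none of which poses a serious obstacle.
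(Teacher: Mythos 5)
Your proposal is correct and follows essentially the same route as the paper: the factorization of an arbitrary isometry as a left translation composed with an isotropy element, the identification of the isotropy group with the orthogonal automorphisms via Proposition~\ref{icp}, the normality of $N$ from $\psi\circ L_n\circ\psi^{-1}=L_{\psi(n)}$, and the derivation of (\ref{oa}) and (\ref{sd}) from preservation of the orthogonal splitting $\nn=\zz\oplus\vv$ and the bracket (\ref{br}), followed by differentiation. The paper presents this as the discussion preceding the statement rather than as a displayed proof, but the content is the same.
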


\begin{exa} \label{change} Let $\nn$ be a 2-step nilpotent Lie algebra equipped with an
inner product and denote it by $\la \,,\,\ra_+$.  
Let $J_z\in \sso(\vv, \la\,,\,\ra_+)$ denote the maps in (\ref{br}) for the inner 
product. 

We shall  consider a non definite metric $\la\,,\, \ra$ on $\nn$ by 
changing the sign of the metric on the center ${\zz}$; thus the metric
on $\vv$ remains invariant and we take 
$$\la z_i, z_j\ra=-\la z_i, z_j\ra_+\qquad \mbox{ for } z_i, z_j\in \zz \qquad
\mbox{ and } \qquad \la \zz, \vv\ra=0.$$
 By (\ref{br}) the maps $j(z)$ for the metric $\la\,,\,\ra$ on $\nn$ are
$$-\la z, [u,v]\ra_+= -\la J(z) u,v\ra_+ =\la j(z)u, v\ra=\la z,
[u,v]\ra,\quad \mbox{ for } z\in \zz$$
that is $j(z)=-J(z)$ for every $z\in \zz$.

\vspace{.3cm}

We work out an example on the Heisenberg Lie group $H_3$. This is the 
simply connected Lie group whose Lie algebra is $\hh_3$  which 
is spanned by the vectors 
$e_1, e_2, e_3$, with  the non trivial Lie bracket relation $[e_1, e_2]=e_3$. 
The canonical left invariant metric $(\la\,,\,\ra_+)$ is that one obtained by declaring the basis 
above to be orthogonal and the map 
$J(e_3)$ for $\la\,,\,\ra_+$ is
$$\left( \begin{matrix} 0 & -1 \\  1 & 0
\end{matrix} \right)
$$

A Lorentzian metric $\la\,,\,\ra$ is obtained on  $H_3$ by changing the sign of the 
canonical  metric on the center. 
Kaplan showed that $(H_3,\la\,,\,\ra_+)$  is naturally
reductive (\cite{Ka}).  In the next sections we shall see that 
$(H_3,\la\,,\,\ra)$ and generalizations of it, are  also naturally 
reductive. 

By (\ref{oa})  the group of isometries for any of these both metrics is 
$(\RR \times O(2))\ltimes H_3$,  where the action of the isotropy group is 
given by $(\lambda, A)\cdot (z+v)=\lambda z + Av$ for $z\in \zz$ and 
$v\in \vv = span\{e_1,e_2\}$, $\lambda \in \RR$ and $A\in O(2)$. 
 
\end{exa}

\begin{defn} A homogeneous manifold $M$
is said to be {\em naturally reductive} if there is a transitive Lie group of isometries
$G$ with Lie algebra $\ggo$  and there exists a subspace
$\mm\subseteq \ggo$ complementary to $\hh$,  the Lie algebra of the isotropy group 
$H$, in $\ggo$ such that
$$\Ad(H)\mm \subseteq \mm \qquad \mbox{ and }\qquad 
\la [x,y]_{\mm}, z\ra + \la y, [x,z]_{\mm} \ra=0 \qquad \mbox{ for all } x, y,
 z\in \mm.$$
\end{defn}
Frequently we will say that a metric on a homogeneous space $M$ is naturally reductive even though it is not naturally with respect to a particular transitive group of isometries (see Lemma 2.3 in \cite{Go}).

 For naturally reductive metrics the geodesics passing through $m\in M$ are of the form  $$\gamma(t)=\exp(t x) \cdot m\qquad \quad \mbox{ for some }x\in \mm.$$

A point $p$ of a pseudo Riemannian manifold is called a {\em pole} provided the exponential map $exp_p$ is a diffeomorphism. Furthemore if $o$ is a pole of the naturally reductive pseudo Riemannian manifold $G/H$, then the map $(x, h) \to \exp(x) h$ is a diffeomorphism of $\mm \times H\to G$ \cite{ON} Ch. 11.

  Indeed pseudo Riemannian
 symmetric spaces are naturally reductive. Examples of naturally
 reductive spaces arise from Lie groups equipped with a bi-invariant metric,
 which could exist for nilpotent ones. In the Riemmannian case, if
 a nilmanifold $N$ admits a naturally reductive metric, then $N$ is at most
 2-step nilpotent \cite{Go}.

\section{Naturally reductive metrics with non degenerate center: a
characterization}

In this section we achieve a  characterization of naturally reductive
pseudo Riemannian simply connected 2-step nilpotent Lie groups with non degenerate center 
 by studying the set of maps $j(z)$
$z\in \zz$ defined in (\ref{br}), showing that they build a subalgebra of the
Lie algebra of the isotropy  group $H$.

 \begin{lem} \label{l1} Let $(\nn, \la\,,\,\ra)$ denote a 2-step nilpotent Lie
  algebra  equipped with a metric for which its center $\zz$ is non degenerate
   and assume $j$ is injective. Let
 $\hh=\sso(\nn,\la\,,\,\ra) \cap \Der(\nn)$ denote the  Lie subalgebra of the group of 
 isometries
 fixing the identity element in the corresponding simply connected Lie group 
 $N$.  Then
 \vskip 3pt
 
 {\rm i)} $\hh$ leaves each of $\zz$ and $\vv$ invariant,
 
 \vskip 2pt
 
 {\rm ii)} For $\phi\in \hh$,
 $$\phi_{|_{\zz}} = j^{-1} \circ \ad_{\sso(\vv)}\phi_{|_{\vv}}\circ j.$$
 In particular $\phi \to \phi_{|_{\vv}}$ is an isomorphism of $\hh$ onto a
 subalgebra of $\sso(\vv,\la\,,\,\ra_{\vv})$.
 
 \vskip 2pt
 
 {\rm iii)} Let $\phi \in \sso(\vv,\la\,,\,\ra_{\vv})$. Then $\phi$ extends to an 
 element of $\hh$
 if and only if $[\phi, j(\zz)]\subseteq J(\zz)$ and  
 $j^{-1} \circ \ad_{\sso(\vv)}\phi_{|_{\vv}}\circ j  \in \sso(\zz,\la\,,\,\ra_{\zz})$.
 \end{lem}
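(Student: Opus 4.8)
The plan is to reduce all three assertions to the explicit description (\ref{sd}) of $\hh$ together with the injectivity of $j$, so that the only genuinely structural input is the invariance statement i). I would first prove i). Since any $\phi\in\hh$ is in particular a derivation of $\nn$, it preserves the center: for $x\in\zz$ and arbitrary $y\in\nn$ one has $[\phi x,y]=\phi[x,y]-[x,\phi y]=0$, because $[x,\cdot]$ vanishes on all of $\nn$; hence $\phi x\in\zz$ and $\phi(\zz)\subseteq\zz$. As $\phi$ also lies in $\sso(\nn,\la\,,\,\ra)$ and $\vv=\zz^{\perp}$, for $v\in\vv$ and $z\in\zz$ we get $\la\phi v,z\ra=-\la v,\phi z\ra=0$ (using $\phi z\in\zz$ and $v\perp\zz$), so $\phi(\vv)\subseteq\zz^{\perp}=\vv$. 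This invariance is exactly what allows one to write each $\phi\in\hh$ as the pair $(A,B)=(\phi_{|_{\zz}},\phi_{|_{\vv}})$ and thereby identify $\hh$ with the set appearing in (\ref{sd}).

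For ii), given $\phi\in\hh$ I would invoke (\ref{sd}), which yields $[B,j(x)]=j(Ax)$ for all $x\in\zz$, where $A=\phi_{|_{\zz}}$ and $B=\phi_{|_{\vv}}$. Reading this as $\ad_{\sso(\vv)}B\circ j=j\circ A$ and using that $j$ is injective (so that $j^{-1}$ is defined on its image $j(\zz)$, which contains every $[B,j(x)]=j(Ax)$) gives $A=j^{-1}\circ\ad_{\sso(\vv)}B\circ j$, the asserted formula. The assignment $\phi\mapsto\phi_{|_{\vv}}=B$ is a Lie algebra homomorphism, since restriction to the invariant subspace $\vv$ respects brackets; it is injective because $B=0$ forces $j(Ax)=0$, hence $Ax=0$ for all $x$ by injectivity of $j$, i.e. $\phi=0$. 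Thus it is an isomorphism of $\hh$ onto a subalgebra of $\sso(\vv,\la\,,\,\ra_{\vv})$.

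For iii) I would argue both implications through (\ref{sd}) and the formula of ii). If $\phi\in\sso(\vv,\la\,,\,\ra_{\vv})$ extends to some $(A,\phi)\in\hh$, then (\ref{sd}) gives $[\phi,j(x)]=j(Ax)\in j(\zz)$, whence $[\phi,j(\zz)]\subseteq j(\zz)$, and $A=j^{-1}\circ\ad_{\sso(\vv)}\phi\circ j$ lies in $\sso(\zz,\la\,,\,\ra_{\zz})$. Conversely, the hypothesis $[\phi,j(\zz)]\subseteq j(\zz)$ is precisely what makes $A:=j^{-1}\circ\ad_{\sso(\vv)}\phi\circ j$ a well-defined linear endomorphism of $\zz$ satisfying $[\phi,j(x)]=j(Ax)$ by construction; the second hypothesis $A\in\sso(\zz,\la\,,\,\ra_{\zz})$ then places $(A,\phi)$ in the set (\ref{sd}), exhibiting $\phi$ as $\psi_{|_{\vv}}$ for some $\psi\in\hh$.

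I do not expect a serious obstacle, as the content is essentially bookkeeping around (\ref{sd}) and the inversion of $j$. The one point to track carefully throughout is the well-definedness of the composition $j^{-1}\circ\ad_{\sso(\vv)}(\cdot)\circ j$, namely that the relevant commutators land in the image $j(\zz)$ on which $j^{-1}$ is defined: this is automatic in ii) via (\ref{sd}), and is exactly the inclusion $[\phi,j(\zz)]\subseteq j(\zz)$ to be checked in iii).
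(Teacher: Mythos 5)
Your proposal is correct and follows essentially the same route as the paper: the paper's proof also rests on the characterization $j(Ax)=[B,j(x)]$ from (\ref{sd}) together with the injectivity of $j$, dismissing i) as "easy to prove" (you simply supply the easy argument that a derivation preserves $\zz$ and a skew-adjoint map then preserves $\vv=\zz^{\perp}$). Your additional checks — injectivity of $\phi\mapsto\phi_{|_{\vv}}$ and the well-definedness of $j^{-1}\circ\ad_{\sso(\vv)}(\cdot)\circ j$ in iii) — are correct and only make explicit what the paper leaves implicit.
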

\begin{proof} i) is easy to prove.  We shall
show (ii) and (iii). Let $A\in \sso(\zz,\la\,,\,\ra_{\zz})$ and $B\in
\sso(\vv,\la\,,\,\ra_{\vv})$, the linear map
$\phi$ which agrees with  $(A, B)\in \zz \oplus \vv$ lies in $\hh$ if and only
 if 
 $$\la j(Ax)u, v\ra=\la (B j(x)-j(x)B)u, v\ra \qquad \mbox{ for }x\in \zz,
 \,u,v\in \vv$$
  which is equivalent to $j(A(x))=[B, j(x)]$ the last one denotes the Lie
  bracket in $\sso(\vv,\la\,,\,\ra_{\vv})$ and since $j$ was assumed injective one gets
  $A=j^{-1}\circ\ad_{\sso(\vv)}(B) \circ j$.
\end{proof}

The proof of the next theorem coincides with that one given by C. Gordon in \cite{Go}. For the sake of
completeness we include it here. However the consequences are quite
different from the Riemannian situation.
 
\begin{thm} \label{t1} Let $(N, \la\,,\,\ra)$ denote a 2-step simply connected Lie group 
equipped with a left invariant pseudo Riemannian metric such that the center is non
degenerate and assume $j$ is injective. 
Then the metric is naturally reductive with respect to $G=H\ltimes N$
being $H$ the group of orthogonal automorphisms,  
if and only if
\vskip 3pt

{\rm (i)} $j(\zz)$ is a Lie subalgebra of $\sso(\vv,\la\,,\,\ra_{\vv})$ and 

\vskip 2pt

{\rm (ii)} $[j(x),j(y)] =  j(\tau_x y)$ where $\tau_x\in
\sso(\zz,\la\,,\,\ra_{\zz})$ for any 
$x\in \zz$.
\end{thm}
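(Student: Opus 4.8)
The plan is to prove Theorem~\ref{t1} by directly unpacking the naturally reductive condition against the reductive decomposition $\ggo=\hh\oplus\mm$ associated to the presentation $G=H\ltimes N$. The natural complement is $\mm=\nn=\zz\oplus\vv$, sitting inside $\ggo=\hh\oplus\nn$; the isotropy $H$ acts on $\mm$ by orthogonal automorphisms, so the condition $\Ad(H)\mm\subseteq\mm$ is automatic. What remains is the algebraic identity $\la[x,y]_{\mm},z\ra+\la y,[x,z]_{\mm}\ra=0$ for all $x,y,z\in\mm$, where $[\,,\,]_{\mm}$ denotes the $\mm$-component of the bracket in $\ggo$. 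The first thing I would do is compute this $\mm$-projected bracket explicitly on the three mixed types of arguments (both in $\vv$, one in $\vv$ one in $\zz$, both in $\zz$), using that $[\vv,\vv]\subseteq\zz$ via $j$ as in (\ref{br}), and that the $\hh$-part of $[\,,\,]_{\ggo}$ that appears comes precisely from the failure of $\mm$ to be a subalgebra.

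The key conceptual point, which I expect to carry the proof, is the identification of Lemma~\ref{l1}: when $j$ is injective, $\hh$ is isomorphic (via $\phi\mapsto\phi_{|_\vv}$) to a subalgebra of $\sso(\vv,\la\,,\,\ra_\vv)$, and under this identification the $\hh$-valued part of the bracket of two elements of $\vv$ is governed by $j^{-1}\circ\ad_{\sso(\vv)}\circ j$. Concretely, I would show that the bracket in $\ggo$ of two vectors $u,v\in\vv$ decomposes as $[u,v]_\ggo=[u,v]_\nn + (\text{component in }\hh)$, and that the $\hh$-component is nonzero exactly when $[j(x),j(y)]$ fails to lie in $j(\zz)$; this is where conditions (i) and (ii) enter. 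So the strategy is to translate ``$[\,,\,]_\mm$ is skew with respect to $\la\,,\,\ra$'' into conditions on the endomorphisms $j(z)$, and to recognize that the obstruction to skewness is precisely whether $j(\zz)$ closes up as a Lie subalgebra (condition (i)) and whether the resulting bracket lands back in $j(\zz)$ via a map $\tau_x$ that is skew on the center (condition (ii)).

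For the forward direction I would assume natural reductivity and read off the constraints: evaluating the skewness identity with $x=z_0\in\zz$ and $y,z\in\vv$, and comparing with the structure of $\ggo=\hh\ltimes\nn$, should force $[j(x),j(y)]$ to be expressible through $j$, giving (i), and the skew-symmetry of $\tau_x$ on $\zz$ should fall out of the orthogonality of the $\hh$-action on $\zz$ encoded in (\ref{sd}). For the converse I would assume (i) and (ii), define $\tau_x\in\sso(\zz)$ as in (ii), and verify the reductive skewness identity term by term, using that each $j(z)$ is self-adjoint on $\vv$ and that (ii) supplies the skew map on the center needed to cancel the cross terms. The main obstacle I anticipate is bookkeeping the $\hh$-component of the bracket correctly: one must be careful that the element of $\hh$ arising from $[u,v]_\ggo$ for $u,v\in\vv$ is the derivation whose restriction to $\vv$ realizes the bracket $[j(x),j(y)]-j(\tau_x y)$, and that its contribution to $\la[x,y]_\mm,z\ra$ vanishes precisely under (ii). Getting the sign conventions and the self-adjointness-versus-skew-adjointness distinction (self-adjoint $j(z)$ on $\vv$ against skew $\tau_x$ on $\zz$) to align is the delicate step, but once the decomposition of $[\,,\,]_\ggo$ into its $\mm$- and $\hh$-parts is pinned down, both implications should reduce to the injectivity of $j$ allowing one to strip off $j$ and compare endomorphisms directly.
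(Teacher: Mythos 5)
Your proposal founders on the choice of reductive complement. You fix $\mm=\nn=\zz\oplus\vv$ inside $\ggo=\hh\ltimes\nn$ and then speak of the ``$\hh$-part of $[\,,\,]_{\ggo}$ arising from the failure of $\mm$ to be a subalgebra.'' But $\nn$ is an ideal of the semidirect product, so with this choice $\mm$ \emph{is} a subalgebra, $[\,,\,]_{\mm}$ is just the bracket of $\nn$, and there is no $\hh$-component anywhere: for $u,v\in\vv$ one has $[u,v]_{\ggo}=[u,v]_{\nn}\in\zz$, full stop. The skewness identity then reads $\la [x,y],z\ra+\la y,[x,z]\ra=0$ for all $x,y,z\in\nn$, i.e.\ the metric is ad-invariant. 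Taking $x,y\in\vv$ and $z\in\zz$ this forces $\la j(z)x,y\ra=0$ for all $x,y$, hence $j\equiv 0$, contradicting injectivity of $j$ whenever $\nn$ is nonabelian. So with your $\mm$ the forward implication is vacuous and the converse is false (the Heisenberg examples of the paper are naturally reductive but never bi-invariant). Conditions (i) and (ii) simply cannot be extracted from this decomposition.

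The missing idea is that the reductive complement must be allowed to vary: any complement of $\hh$ in $\ggo$ is the graph $\mm=\{x+\pi(x):x\in\nn\}$ of a linear map $\pi:\nn\to\hh$, and it is precisely the term $[\pi(u),\pi(v)]\in\hh$ in $[u+\pi(u),v+\pi(v)]$ that produces the $\hh$-component you were looking for. The paper's proof (following Gordon) unpacks natural reductivity for such a graph into the two identities $\ad(y)^*z+\ad(z)^*y=\pi(y)z+\pi(z)y$ and $\pi(\pi(x)y)=[\pi(x),\pi(y)]$; specializing the first to $z\in\zz$, $y\in\vv$ pins down $\pi(z)|_{\vv}=j(z)$, and the second then yields exactly (i) and (ii). For the converse one \emph{constructs} $\pi$ from (i) and (ii) (extending $j(x)$ to an element of $\hh$ via Lemma~\ref{l1} and setting $\pi|_{\vv}=0$) and checks that the resulting graph $\mm$, inside the subalgebra $\kk=\pi(\nn)\oplus\mm$, satisfies the two identities. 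Your instinct that Lemma~\ref{l1} and the injectivity of $j$ are the levers is right, but without introducing the map $\pi$ and the graph complement the argument cannot get off the ground.
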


\begin{proof} Let $\ggo=\hh \ltimes \nn$ be the Lie algebra of $G=H\ltimes N$
 and assume $N$ is naturally reductive with respect to $\ggo=\hh \oplus \mm$.
 Set $\pi:\nn \to \hh$ so that
 $$\mm=\{ x + \pi(x): x\in \nn\}.$$
 The condition for natural reductivity says
 $$
\la [x+\pi(x),y+\pi(y)]_{\mm}, z+\pi(z)\ra_{\mm} =- \la y+\pi(y),
[x+\pi(x),z+\pi(z)]_{\mm} \ra_{\mm}$$
where $\la\,,\,\ra$ is the pseudo Riemannian metric on $\mm$, so that the
previous equality can be interpreted on $\nn$ as
\begin{equation}\label{onn}
\la [x,y]+\pi(x)y-\pi(y)x, z\ra=-\la y, [x,z]+\pi(x) z-\pi(z)x\ra.
\end{equation}
where  $\pi(x)$ is view as a linear operator on $\nn$ and one writes
$\pi(x)y=[x,y]$ when $x, y\in \nn$. Since $\pi(x)\in \sso(\nn, \la\,,\,\ra)$ the terms
involving $\pi(x)$ cancel and (\ref{onn}) yields
\begin{equation}\label{e11}
\ad(y)^*z+\ad(z)^*y = \pi(y) z+\pi(z) y\quad \mbox{ for all }y, z\in \nn.
\end{equation}
Since $[\hh, \nn]\subseteq \nn$ and $[\hh,
\mm]\subseteq \mm$, one has
$$[\pi(x), y+\pi(y)]=\pi(x) y +[\pi(x),\pi(y)]\in \mm$$
and therefore
\begin{equation}\label{e12}
\pi(\pi(x)y)=[\pi(x), \pi(y)] \quad \mbox{ for all } x,y \in \nn.
\end{equation}
If $z\in \zz$ and $y\in \vv$, $\ad(z)^*y=0$ and (\ref{e11}) says
\begin{equation}\label{e13}
j(z)y=\pi(y)z+\pi(z)y.
\end{equation}

But $\pi(y)z\in \zz$ and $\pi(z)y\in \vv$, so (\ref{e13}) implies
$$\pi(z)_{|_{\vv}}= j(z)\in \sso(\vv,\la\,,\,\ra_{\vv})\quad \mbox{ for every } z\in \zz.$$
It then follows that 
$$[j(x), j(\zz)]\subset j(\zz) \quad \mbox{ and } \quad [j(x),
j(y)]=j(\tau_x y)\quad \mbox{ for } \tau_x\in
\sso(\zz,\la\,,\,\ra_{\zz}),\quad x,y\in \zz.$$

Conversely if (i) and (ii) hold, extend $j(x)$ to  an  element  $\pi(x)$ of
$\hh$ such that  the restriction of $\pi(x)$ to $\zz$ is given by the left
hand side of (ii). Extend $\rho$ as a linear map of $\nn$ by declaring
$\pi_{|{\vv}}\equiv 0$.  We claim (\ref{e12}) hold for all $x, y\in \nn$.  In fact it
is easy to verify it if at least one of $x,y\in \vv$. Assume $x,y \in \zz$, then
$$\pi(\pi(x)y)_{|_{\vv}}=j(j^ {-1}[j(x),j(y)])=[j(x),j(y)]$$
and therefore (\ref{e12}) is true after (\ref{l1}) ii). Define
$$\mathfrak l=\pi(\nn), \quad \mm=\{ x + \pi(x) : x \in \nn\}, \quad \mbox{
and } \quad \kk=\mathfrak l \oplus \mm.$$
By (\ref{e12}) $\mathfrak l$ is a Lie subalgebra of $\hh$ and $[\mathfrak l,
\mm]\subseteq \mm$ and since $\kk=\mathfrak l \oplus \nn$, $\kk$ is a Lie subalgebra of $\ggo$.

We assert that (\ref{e11}) is valid. This can be easily checked whenever at least
one of $x, y\in \vv$. If both $x, y\in \zz$ the left-hand side of (\ref{e11}) is
zero. The right-hand side lies in $\zz \cap ker(\pi)$, but
$\ker(\pi)=\ker(j)$ and since $j$ is injective one has $\zz \cap
ker(\pi)=\{0\}$, which proves (\ref{e11}). By following the argument preceding
(\ref{e11}) backwards, one can see that $M$ is naturally reductive with respect
to $\kk$.
\end{proof}

If $\hh$ is a Lie subalgebra of $\End(\vv)$ such that $\hh\subseteq \sso(\vv,
\la\,,\,\ra_{\vv})$ then we call $\la\,,\,\ra$ an {\em $\hh$-invariant metric}. 

In the conditions of  Theorem (\ref{t1}) it follows that if $(N, \la\,,\,\ra)$ is naturally
reductive  then the bilinear map $\tau$ defines a Lie algebra
structure on $\zz$ and the map $j:\zz \to \sso(\vv,\la\,,\,\ra_{\vv})$ becomes a real
representation of the Lie algebra $(\zz, \tau)$. Furthermore the metric
on $\vv$ is $j(\zz)$-invariant and since $\tau_x\in \sso(\zz, \la\,,\,\ra_{\zz})$ the
metric on $\zz$ is ad($\zz$)-invariant, where $\ad$ denotes the adjoint
representation of $(\zz, \tau)$.

Conversely let $\ggo$ be a real Lie algebra endowed with an ad($\ggo$)-invariant
metric $\la \,,\, \ra_{\ggo}$ and let $(\pi, \vv)$ be a faithful representation
of $\ggo$ endowed with a $\pi(\ggo)$-invariant metric $\la\,,\,\ra_{\vv}$ and
without trivial subrepresentations, that is, $\bigcap_{x\in \ggo}ker \pi(x)=\{0\}$.
Define a 2-step nilpotent Lie algebra structure on the vector space underlying
$\nn=\ggo \oplus \vv$  by the following bracket
\begin{equation}\label{brac}
\begin{array}{ll}
[\ggo,\ggo]_{\nn}=[\ggo, \vv]_{\nn} =0 & [\vv, \vv] \subseteq \ggo \\ \\
\la [u,v], x\ra_{\ggo} = \la \pi(x) u, v\ra_{\vv}& \forall x\in \ggo, u, v\in
\vv.
\end{array}
\end{equation}
and equip $\nn$ with the metric obtained as the product metric
\begin{equation}\label{metric} 
\la \,,\,\ra_{|_{\ggo \times \ggo}}=\la \,,\, \ra_{\ggo}\qquad \la
\,,\,\ra_{|_{\vv \times \vv}}=\la \,,\, \ra_{\vv} \qquad \la \ggo, \vv\ra=0.
\end{equation}

Take $N$ the simply connected 2-step nilpotent Lie group with Lie algebra 
$\nn$ and endow it with the left invariant metric determined by $\la \,,\,\ra$.

Since $(\pi, \vv)$ has no trivial subrepresentations, the center of $\nn$ 
coincides with $\ggo$. Moreover $\vv$ is its orthogonal complement  and the 
transformation $j(x)$ defined as in (\ref{br}) is precisely $\pi(x)$ for all
$x\in \ggo$. Since $(\pi, \vv)$ is faithful, the commutator of $\nn$ is $\ggo$:
 $C(\nn)=\ggo$. Since the set $\{\pi(x)\}_{x\in \ggo}$  is a Lie subalgebra of 
 $\sso(\vv,\la\,,\,\ra_{\vv})$ we conclude that $(N, \la\,,\,\ra)$ is 
 naturally reductive.

\begin{thm}\label{t2} Let $\ggo$ denote a Lie algebra equipped with an 
ad-invariant metric $\la\,,\,\ra_{\ggo}$ and let $(\pi, \vv)$ be a real 
faithful representation of $\ggo$ without trivial subrepresentations and 
endowed with a $\pi(\ggo)$-invariant metric $\la\,,\,\ra_{\vv}$. 
Let $\nn$ be the Lie algebra $\nn=\ggo \oplus \vv$ direct sum of vector spaces,
together with the Lie bracket given by  (\ref{br}) and furnished with the metric 
 $\la\,,\,\ra$ as in (\ref{metric}). Then the corresponding simply connected 
 2-step nilpotent Lie group 
 $(N,\la \,,\,\ra)$, being $\la\,,\,\ra$ the induced left invariant metric, is a naturally reductive pseudo Riemannian
 space.  
 
 The converse holds whenever the center of $N$ is non degenerate and $j$ is
 faithful. \end{thm}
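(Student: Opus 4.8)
The plan is to deduce both implications from Theorem~\ref{t1}, of which the present statement is essentially a representation-theoretic reformulation; no new geometry is needed beyond fixing the correct algebraic dictionary between the two pictures.

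For the direct implication I would first pin down the invariants of $\nn$. Since $(\pi,\vv)$ has no trivial subrepresentations, $\bigcap_{x\in\ggo}\Ker\pi(x)=\{0\}$, and a short computation with~(\ref{br}) shows that the center of $\nn$ is exactly $\ggo$, that $\vv=\ggo^{\perp}$, and that the structure maps $j(x)$ of~(\ref{br}) coincide with $\pi(x)$. Faithfulness of $\pi$ then makes $j$ injective, while $\la\,,\,\ra_{\ggo}$, being a metric, makes the center non degenerate, so the hypotheses of Theorem~\ref{t1} are in force and it suffices to check its two conditions. Because $\pi$ is a representation, $[\pi(x),\pi(y)]=\pi([x,y]_{\ggo})\in\pi(\ggo)=j(\zz)$, which is~(i); the same identity gives $\tau_x=\ad(x)$ for the adjoint representation of $\ggo$, and ad-invariance of $\la\,,\,\ra_{\ggo}$ is precisely the requirement $\tau_x\in\sso(\zz,\la\,,\,\ra_{\zz})$, which is~(ii). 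Theorem~\ref{t1} then yields natural reductivity.

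For the converse I would assume natural reductivity with $\zz$ non degenerate and $j$ injective, so that Theorem~\ref{t1} supplies~(i) and~(ii), and set $\ggo:=(\zz,\tau)$ with bracket $\tau_x y$. The step needing care is that $\tau$ is a genuine Lie bracket: antisymmetry follows from $j(\tau_x y)=[j(x),j(y)]=-[j(y),j(x)]=-j(\tau_y x)$ combined with the injectivity of $j$, and the Jacobi identity follows by applying the injective map $j$ to the Jacobi expression and invoking the Jacobi identity for the commutator in $\End(\vv)$. With $\ggo$ thus a Lie algebra, $j$ becomes a faithful representation of it, the metric on $\vv$ is $j(\ggo)$-invariant since $j(\ggo)\subseteq\sso(\vv,\la\,,\,\ra_{\vv})$, and the metric on $\zz$ is ad-invariant because $\tau_x\in\sso(\zz,\la\,,\,\ra_{\zz})$. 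Finally $(j,\vv)$ has no trivial subrepresentations: if $v\in\vv$ satisfied $j(x)v=0$ for all $x\in\zz$, then $\la[v,u],x\ra=\la j(x)v,u\ra=0$ for all $u\in\vv$ and $x\in\zz$, so $[v,u]\in\zz$ forces $[v,u]=0$, whence $[v,\nn]=0$ and $v\in\zz\cap\vv=\{0\}$. Hence $(N,\la\,,\,\ra)$ is exactly the group produced by the construction.

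The main obstacle I anticipate is solely the verification that $\tau$ defines a Lie algebra structure on $\zz$; all the remaining assertions are bookkeeping transported across the injective intertwiner $j$. The hypotheses that $\zz$ be non degenerate and that $j$ be injective are used precisely to move algebraic identities back and forth between $\zz$ and the matrix Lie algebra $\sso(\vv,\la\,,\,\ra_{\vv})$ without loss of information.
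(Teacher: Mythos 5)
Your proposal is correct and follows essentially the same route as the paper: both directions are obtained by translating the hypotheses into conditions (i) and (ii) of Theorem~\ref{t1}, after identifying $\zz(\nn)=\ggo$, $\vv=\ggo^{\perp}$ and $j(x)=\pi(x)$. The only difference is that you spell out details the paper leaves implicit (that $\tau$ is antisymmetric and satisfies Jacobi via the injectivity of $j$, and the absence of trivial subrepresentations of $(j,\vv)$), which is a welcome addition rather than a deviation.
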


\begin{rem} Suppose the representation $(\pi, \vv)$ of $\ggo$ is not 
faithful. Thus 
$$z\in ker \pi \Longleftrightarrow \la z, [u,v]\ra=0 \quad \forall u,v \in
\vv $$
$\Longrightarrow z \in C(\nn)^{\perp}$. Since the metric on the center $\ggo$ is non definite,  $ker \pi \cap C(\nn)$ coul be non trivial, so that the sum as vector spaces $ker \pi+C(\nn)$ is not necessarly direct. 

When $\pi$ has some trivial subrepresentation,  
$$u\in \cap_{x\in \ggo} \pi(x) \Longleftrightarrow \la \pi(x)u, v\ra=0 \quad
\forall v\in \vv,$$
$\Longrightarrow\la x, [u,v]\ra=0$ for all $x\in \ggo$, thus $[u,v]=0$ for
all $v\in \vv$ which says   $u\in
\zz(\nn)$. Hence $\ggo \subsetneq \zz(\nn)$. 
\end{rem}

\begin{rem} While in the Riemannian case, the condition of the metric to be 
positive definite says that $\ggo$ must be compact, in the pseudo Riemannian 
case the statement above imposes the restriction on $\ggo$ to carry an
ad-invariant metric. See the next example. 
\end{rem} 

\begin{exa} \label{exad} The Killing form on any semisimple Lie algebra is an ad-invariant
metric. 

Any Lie algebra $\ggo$ can be embedded into a Lie algebra which admits an
ad-invariant metric. In fact, the  cotangent $\ct^*\ggo=\ggo
\ltimes_{coad}\ggo^*$, being $coad$ the coadjoint representation,  admits a neutral ad-invariant metric which is given by:
$$\la (x_1, \varphi_1),(x_2,\varphi_2)\ra=\varphi_1(x_2)+\varphi_2(x_1)\qquad
\qquad x_1, x_2\in \ggo,\quad \varphi_1, \varphi_2\in \ggo^*.$$
Notice that both $\ggo$ and $\ggo^*$ are isotropic subspaces. 
\end{exa}

\vskip 3pt

A {\em data set} $(\ggo, \vv, \la\,,\,\ra)$ consists of 

(i) a Lie algebra $\ggo$  equipped with a $\ad$-invariant  metric
$\la\,,\,\ra_{\ggo}$,

(ii) a real faithful representation of $\ggo$: $(\pi, \vv)$, without trivial
subrepresentations, 

(iii) $\la\,,\,\ra$ is a $\ggo$-invariant metric on $\nn=\ggo\oplus
\vv$, i.e. $\la\,,\,\ra_{|_{\ggo\times \ggo}}=\la\,,\,\ra_{\ggo}$ is 
ad($\ggo$)-invariant and 
$\la\,,\,\ra_{|_{\vv\times \vv}}$ is $\pi(\ggo)$-invariant and $\la \ggo,\vv\ra=0$.

A data set $(\ggo, \vv, \la\,,\,\ra)$ determines a 2-step nilpotent Lie group
denoted by $N(\ggo, \vv)$ whose Lie algebra is the underlying vector space $\nn=\ggo
\oplus \vv$ with the Lie bracket defined by (\ref{brac}). Extend the
metric on $\ggo$ by left translations after identifying 
$\nn\simeq T_eN(\ggo, \vv)$, so that $N(\ggo, \vv)$ becomes a naturally reductive
pseudo Riemannian 2-step nilpotent Lie group   (\ref{t2}).

We study the isometry group in this case. Let $\hh$ denote the Lie algebra of
the isometries fixing the identity element; by (\ref{sd}) an element $D\in \hh$
is a self adjoint derivation which can be written as
$D=(A,B)\in \sso(\ggo,\la\,,\,\ra_{\ggo}) \times \sso(\vv,\la\,,\,\ra_{\vv})$ such that 
$$ B\pi(x) -\pi(x) B =\pi(Ax),\quad \forall x\in \ggo.$$
Denote by $[\,,\,]_{\nn}$ the Lie bracket on $\nn$ and by $[\,,\,]$ the Lie
brackets on $\ggo$ and $\End(\vv)$. Then
$$\begin{array}{rcl}
\pi(A[x,y]) & = & B\pi([x,y])-\pi([x,y])B = B[\pi(x),\pi(y)]-[\pi(x),\pi(y)]B \\
& = & [B, [\pi(x),\pi(y)]]=[[B,\pi(x)]+[\pi(x),[B,\pi(y)]]\\
& = & [\pi(Ax),\pi(y)]+[\pi(x),\pi(Ay)]=\pi([Ax,y]+[x,Ay]).
\end{array}
$$
Since $\pi$ is faithful then
$$A[x,y]=[Ax,y]+[x,Ay]\qquad \quad\mbox{ for all } x,y \in \ggo, $$
that is, $A\in \Der(\ggo) \cap \sso(\ggo,\la\,,\,\ra_{\ggo})$. 

\begin{prop} \label{pi} The group of isometries fixing the identity on a naturally reductive pseudo Riemannian 2-step nilpotent Lie group $N(\ggo, \vv)$ as in (\ref{t2}) has Lie algebra
$$\hh=\{(A,B)\in (\Der(\ggo)\cap \sso(\ggo, \la\,,\,\ra_{\ggo}))\times \sso(\vv,\la\,,\,\ra_{\vv})\,:\, [\pi(x),B]=\pi(Ax)\quad \forall x\in \ggo\}.$$
\end{prop}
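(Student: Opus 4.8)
The plan is to reduce the assertion to the general description (\ref{sd}) of the isotropy Lie algebra and then to recognize the derivation condition as forced by the representation structure. Observe first that $N(\ggo,\vv)$ has non degenerate center: by the construction preceding Theorem \ref{t2} the center of $\nn$ equals $\ggo$, and the restriction of $\la\,,\,\ra$ to $\ggo$ is the ad-invariant metric $\la\,,\,\ra_{\ggo}$, which is non degenerate by hypothesis. Hence Proposition \ref{icp} applies and the group of isometries fixing the identity coincides with the group $H$ of orthogonal automorphisms, whose Lie algebra is given by (\ref{sd}). Since here $\zz=\ggo$ and the structure map satisfies $j(x)=\pi(x)$ for all $x\in\ggo$, formula (\ref{sd}) specializes to
$$\hh=\{(A,B)\in\sso(\ggo,\la\,,\,\ra_{\ggo})\times\sso(\vv,\la\,,\,\ra_{\vv}):[B,\pi(x)]=\pi(Ax),\ \forall x\in\ggo\}.$$

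It then remains only to show that the first component of any such pair is automatically a derivation of $\ggo$; this is the single substantive step, and the one in which the hypotheses on $(\pi,\vv)$ are used. The idea is to combine the intertwining relation $[B,\pi(x)]=\pi(Ax)$ with the fact that $\pi$ is a representation, so that $\pi([x,y])=[\pi(x),\pi(y)]$, and then apply the Jacobi identity in $\End(\vv)$. Applying the intertwining relation to the element $[x,y]$ and using $\pi([x,y])=[\pi(x),\pi(y)]$ gives $\pi(A[x,y])=[B,[\pi(x),\pi(y)]]$, and expanding the right-hand side as $[[B,\pi(x)],\pi(y)]+[\pi(x),[B,\pi(y)]]$ yields $\pi(A[x,y])=[\pi(Ax),\pi(y)]+[\pi(x),\pi(Ay)]=\pi([Ax,y]+[x,Ay])$. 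Because $\pi$ is faithful one may cancel it and conclude $A[x,y]=[Ax,y]+[x,Ay]$, that is $A\in\Der(\ggo)$. This is precisely the computation displayed just before the statement, so no additional work is needed.

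Both inclusions now follow at once. The set in the proposition is obtained from the set (\ref{sd}) by imposing the extra requirement $A\in\Der(\ggo)$, hence it is contained in $\hh$; conversely the preceding paragraph shows that every element of $\hh$ already meets this requirement, so the two sets agree. The apparent sign mismatch between $[B,\pi(x)]=\pi(Ax)$ coming from (\ref{sd}) and $[\pi(x),B]=\pi(Ax)$ in the statement is harmless, since replacing $A$ by $-A$ preserves both $\sso(\ggo,\la\,,\,\ra_{\ggo})$ and $\Der(\ggo)$ and thus describes the same set. I expect no genuine obstacle here: the only point requiring care is the derivation identity, and even that is a short Jacobi-identity manipulation whose success hinges entirely on the faithfulness of $\pi$, which is exactly what permits passing from an identity among the operators $\pi(z)$ to an identity in $\ggo$ itself.
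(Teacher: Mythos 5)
Your argument is correct and follows the paper's own route exactly: specialize the general isotropy description (\ref{sd}) to $\zz=\ggo$, $j=\pi$, and then use the Jacobi identity in $\End(\vv)$ together with faithfulness of $\pi$ to upgrade $A\in\sso(\ggo,\la\,,\,\ra_{\ggo})$ to a derivation -- this is verbatim the computation the paper displays before the proposition. Your remark on the sign discrepancy correctly identifies it as a typographical inconsistency between (\ref{sd}) and the statement (strictly, $(A,B)\mapsto(-A,B)$ is a bijection between the two sets rather than an equality, but the intended content is unaffected), and your explicit check that Proposition \ref{icp} applies is a harmless addition the paper leaves implicit.
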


Whenever $\ggo$ is semisimple, the ad-invariant metric on $\ggo$ is the
Killing form; therefore any self adjoint derivation of $\ggo$ is of the form $\ad(x)$ 
for some $x\in \ggo$ . In this case one can consider $\ggo
\subset \hh$ where the action is given as
$$ x \cdot (z + v) = \ad(x) z + \pi(x) v\qquad x\in \ggo, \,z+ v\in \nn$$
being $\ad(x)$  the adjoint map on the semisimple Lie algebra $\ggo$.
Thus an element $D=(A,B)\in \hh$ is of the form
$$(A,B)=(\ad(x), \pi(x))+(0,B')\qquad x\in \ggo$$
with $B'=B-\pi(x)\in \End_{\ggo}(\vv)\cap \sso(\vv,
\la\,,\,\ra_{\vv})=\ee_{\ggo}$, where 
$\End_{\ggo}(\vv)$ denotes the set of intertwinning operators of the
representation $(\pi,\vv)$ of $\ggo$. Since $\ggo$ and $\ee_{\ggo}$ commute, 
then $\hh=\ggo\oplus \ee_{\ggo}$ is a direct sum of Lie algebras, here we
identify $\ggo$ with the set $\{(\ad(x), \pi(x)): x\in \ggo\}\subseteq \hh$.
This argues the following result.

\begin{cor} \label{coro} In the conditions of (\ref{pi}) with data set $(\ggo,\vv,
\la\,,\,\ra)$ for $\ggo$ semisimple the group of
isometries fixing the identity element is
$$H=G \times U\qquad \qquad U=\End_{\ggo}(\vv) \cap \Or(\vv,
\la\,,\,\ra_{\vv}).$$
\end{cor}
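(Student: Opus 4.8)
The plan is to promote the Lie-algebra splitting obtained just above the statement to the level of groups, using the explicit description (\ref{oa}) of $H$. First I would record the two consequences of semisimplicity that drive everything: since every derivation of a semisimple $\ggo$ is inner and the Killing form is ad-invariant, each $\ad(x)$ is automatically skew-adjoint, whence $\Der(\ggo)\cap\sso(\ggo,\la\,,\,\ra_{\ggo})=\ad(\ggo)$, and $\ad$ is injective because $\ggo$ is centerless. Feeding this into Proposition \ref{pi}, an arbitrary $(A,B)\in\hh$ has $A=\ad(x)$ for a unique $x$, and then the relation $[B,\pi(y)]=\pi(\ad(x)y)=[\pi(x),\pi(y)]$ shows that $B':=B-\pi(x)$ commutes with all of $\pi(\ggo)$; being also skew-adjoint, $B'\in\ee_{\ggo}=\End_{\ggo}(\vv)\cap\sso(\vv,\la\,,\,\ra_{\vv})$. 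As $\ee_{\ggo}$ is by definition the commutant of $\pi(\ggo)$, the two summands commute, giving the direct sum of commuting ideals $\hh=\ad(\ggo)\oplus\ee_{\ggo}\cong\ggo\oplus\ee_{\ggo}$.

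Next I would integrate each ideal to a subgroup of $H$. Let $G$ be the connected Lie group with Lie algebra $\ggo$ and let $\rho$ denote the representation on $\vv$ integrating $\pi$; then $g\mapsto(\Ad(g),\rho(g))$ is a homomorphism into $\Or(\ggo,\la\,,\,\ra_{\ggo})\times\Or(\vv,\la\,,\,\ra_{\vv})$, and the equivariance $\rho(g)\pi(x)\rho(g)^{-1}=\pi(\Ad(g)x)$ places its image inside $H$ by (\ref{oa}). On the other side, each $T\in U:=\End_{\ggo}(\vv)\cap\Or(\vv,\la\,,\,\ra_{\vv})$ satisfies $T\pi(x)T^{-1}=\pi(x)$, so $(\mathrm{id},T)\in H$; thus $U$ embeds in $H$ with Lie algebra $\ee_{\ggo}$. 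These two subgroups commute, since the elements of $U$ are intertwiners and hence commute with every $\rho(g)$, mirroring $[\ad(\ggo),\ee_{\ggo}]=0$.

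The heart of the argument is the factorization $H=G\cdot U$. Given $(\phi,T)\in H$, applying $\pi$ to a bracket and invoking faithfulness shows $\phi\in\Aut(\ggo)$. When $\phi=\Ad(g)$ is inner, the computation $T\pi(x)T^{-1}=\pi(\Ad(g)x)=\rho(g)\pi(x)\rho(g)^{-1}$ shows that $\rho(g)^{-1}T$ commutes with all $\pi(x)$ and is orthogonal, hence lies in $U$; therefore $(\phi,T)=(\Ad(g),\rho(g))\,(\mathrm{id},\rho(g)^{-1}T)\in G\cdot U$. Combining this with the commutativity of the two factors, and noting $G\cap U=\{e\}$ once $G$ is identified with its image acting on $\ggo\oplus\vv$, yields $H=G\times U$.

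The step I expect to be the genuine obstacle is precisely this passage from the Lie algebra to the group, since $\hh=\ggo\oplus\ee_{\ggo}$ only sees the identity component and two points must be checked by hand. First, that every $(\phi,T)\in H$ has $\phi$ \emph{inner}: an outer $\phi$ can occur only when $\pi\circ\phi\cong\pi$ (as $T$ intertwines $\pi\circ\phi$ with $\pi$), so one must argue that such outer symmetries are either absent for the representation at hand or already accounted for in $G$. Second, that the product is genuinely direct and not merely almost direct: because $\Ad(g)=\mathrm{id}$ forces $g$ central while $\rho(g)$ need not be trivial, the image of $G$ a priori meets $U$ along $\rho(Z(G))$, and I would normalize the first factor to $\Inn(\ggo)$ (equivalently, take $G$ so that $\Ad$ is faithful on it) to make $G\cap U=\{e\}$ hold on the nose. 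Granting that $H$ is generated by inner automorphisms together with the commutant, the commuting factors $G$ and $U$ with trivial intersection assemble into the asserted direct product $H=G\times U$.
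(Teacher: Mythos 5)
Your argument is essentially the paper's argument: start from the description (\ref{oa}) of $H$, use faithfulness of $\pi$ to conclude $\phi\in\Aut(\ggo)$, write $\phi=\Ad(g)$, and factor $(\phi,T)=(\Ad(g),\pi(g))\cdot(I,\pi(g)^{-1}T)$ with the second factor lying in $U$ because it commutes with every $\pi(x)$ and is orthogonal. The two ``obstacles'' you flag are worth dwelling on, because the paper does not actually resolve them. On the first: the paper disposes of it with the assertion that every automorphism of a semisimple Lie algebra is inner, which is false in general (e.g.\ $X\mapsto -X^{t}$ on $\ssl_n(\RR)$ for $n\ge 3$, or complex conjugation on $\ssu(3)$); what is true is only that every \emph{derivation} is inner, which gives the Lie algebra statement $\hh=\ggo\oplus\ee_{\ggo}$ but not the group statement. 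Your observation that an outer $\phi$ can occur exactly when $T$ intertwines $\pi\circ\phi$ with $\pi$ is the correct diagnosis, and such pairs do exist for natural data sets (take $\pi=\ad$, which is preserved by every automorphism), so the corollary as stated needs either a restriction on $(\ggo,\pi)$ or a correction by the group of outer automorphisms preserving $\pi$ up to equivalence. On the second: the paper writes $H=G\times U$ without checking that the image of $g\mapsto(\Ad(g),\pi(g))$ meets $U$ trivially; as you note, a central $g$ gives $\Ad(g)=I$ with $\pi(g)$ possibly a nontrivial intertwiner, so without normalizing the first factor the product is only almost direct. In short, you have reproduced the paper's proof and correctly located the places where it is incomplete; neither caveat is a defect of your write-up relative to the source.
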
 
\begin{proof} By (\ref{oa}) we have that
$$H=\{(\phi, T)\in \Or(\ggo, \la\,,\,\ra_{\ggo}) \times \Or(\vv, \la\,,\,\ra_{\vv}):
T\pi(x)T^{-1}=\pi(\phi x), \quad x\in \ggo\}.$$
Hence $\phi=\pi^{-1}\circ \Ad(T)\circ \pi\in \Aut(\ggo)$. Since $\ggo$ is
semisimple any automorphism of $\ggo$ is an inner automorphism, thus there
exist $g\in G$ such that $\phi=\Ad(g)$. By the paragraph above, 
 $(Ad(g),\pi(g))\in H$ and therefore $\pi(g)^{-1} T\in U$. Hence 
$$(\phi,T)=(\Ad(g), \pi(g)) \cdot (I, \pi(g)^{-1} T),$$
which says $H=G\times U$.
\end{proof}

\begin{rem} Compare with  \cite{La}.
\end{rem}

\section{Geometry and Examples of naturally reductive 2-step nilmanifolds with non degenerate center}

The aim of this section is twofold. In the first part we write explicitly
some geometric features to bring into the proof of (\ref{icp}), while in the second part we show examples of
naturally reductive pseudo Riemannian 2-step nilpotent Lie groups with non
degenerate center.

Recall that a 2-step nilpotent Lie algebra $\nn$ is said to be {\em non singular} if
 $\ad(x)$  maps $\nn$ onto $\zz$ for every $x\in \nn-\zz$. Suppose $\nn$ is equipped with a metric as
 in (\ref{des2}) then $\nn$ is non singular if and only if $j(x)$ is non
 singular for every $x\in \zz$. We shall say that a Lie group is non singular if its corresponding Lie algebra is non singular.

Whenever  $N$ is simply connected 2-step nilpotent the exponential map 
$\exp:\nn \to N$ produces global coordinates. In terms of this map the product on
$N$ can be obtained by
$$\exp(z_1+v_1) \exp(z_2+v_2)= \exp(z_1+z_2+\frac12 [v_1,v_2] + v_1+v_2) \quad\mbox{ for  } z_1, z_2\in \zz, \, v_1, v_2\in \vv.$$

We shall study the geometry of 2-step nilpotent  Lie groups  when they are 
endowed with a left invariant (pseudo Riemannian) metric $\la\,,\,\ra$ with respect to which the 
center is non degenerate. In the
Riemannian case a deep study of the geometry can be found in the works of P. Eberlein \cite{Eb1, Eb2}.

 The covariant derivative $\nabla$ is left invariant, hence one can see $\nabla$ as a bilinear form on $\nn$ getting the formula
\begin{equation}\label{nabla}
 \nabla_x y= \frac12([x,y]-\ad(x)^*y -\ad(y)^*x) \qquad \quad \mbox{ for }
 x,y \in \nn,
\end{equation}
where $\ad(x)^*$ denotes the adjoint of $\ad(x)$. By writing  this
explicitly one obtains
\begin{equation}\label{nablaex}
\begin{array}{rcll}
\nabla_x y & = & \frac 12[x,y] & \mbox{ for all } x,y \in \vv\\
\nabla_x y= \nabla _y x & = & -\frac12 j(y) x & \mbox{ for all } x\in \vv, y\in
\zz \\
\nabla_x y & = & 0 & \mbox{ for all } x, y\in \zz
\end{array}
\end{equation}
 
 Since translations on the left are isometries, to describe the geodesics of $(N, \la\,,\,\ra)$ it suffices to describe those 
 geodesics  that begin at $e$ the identity of $N$. Let $\gamma(t)$ be a curve 
 with $\gamma(0) = e$, and let $\gamma'(0) = z_0+v_0 \in \nn$, where $z_0\in
 \zz$ and $v_0\in \vv$. In exponential coordinates we write
       $$\gamma(t)=\exp(z(t)+v(t)), \quad \mbox{ where }     z(t)\in \zz, \,
       v(t)\in \vv\quad  \mbox{ for all $t$  and } z'(0)=z_0,\, v'(0)=v_0.$$
       
   The curve $\gamma(t)$ is a geodesic if and only if the following equations
are satisfied:
 \begin{eqnarray} \label{egeo}
  v''(t) & = & j(z_0) v'(t) \mbox{ for all }t \in \RR \\
  z_0 & \equiv & z'(t) + \frac12 [v'(t), v(t)]  \mbox{ for all }t \in \RR
 \end{eqnarray}  
             
  These equations were derived by A. Kaplan in \cite{Ka} to study 
  2-step nilpotent groups N of Heisenberg type, but the proof is valid in
  general for  2-step nilpotent Lie groups equipped with a left invariant 
  pseudo Riemannian metric where the center is  non degenerate as noted in
  \cite{Ge} and \cite{Bo}. 
 
 Let $\gamma(t)$ be a geodesic of $N$ with $\gamma(0) = e$. Write 
 $\gamma'(0) = z_0+v_0$, where $z_0\in \zz$ and $v_0\in \vv$ and identify
 $\nn=T_eN$.  Then
 \begin{equation}\label{geo}
 \gamma'(t) = dL_{\gamma(t)}(e^{tj(z_0)} v_0 + z_0) \qquad \mbox{ for all }
 t \in \RR
 \end{equation}             
where $e^{tj(z_0)}= \sum_{n=0}^{\infty} \frac{t^n}{n!} j(z_0)^n$. In fact, 
write $\gamma(t)=exp (z(t)+v(t))$, where $z(t)$ and $v(t)$ lie in
$\zz$ and $\vv$ respectively for all $t \in \RR$. By using the previous
equations (\ref{egeo}) one has
$$\begin{array}{rcl}
\gamma'(t) &=&  d\exp_{z(t)+v(t)}(z'(t)+v'(t))_{z(t)+v(t)} \\
& = &  dL_{\gamma(t)}(z'(t) + \frac12 [v'(t), v(t)]+ v')\\
& = &  dL_{\gamma(t)}(z_0+ v').
\end{array}
$$
Now by integrating the first equation of (\ref{egeo}) one gets
$v'(t)=e^{tj(z_0)} v_0$ which proves (\ref{geo}).

For $x,y$ elements in $\nn$ the curvature tensor is defined by
$$R(x,y)=[\nabla_x, \nabla_y]-\nabla_{[x,y]}.$$
Using (\ref{nablaex}) one gets
\begin{equation}\label{cu}
R(x,y)z = \left\{
\begin{array}{ll}
 \frac12 j([x,y])z -\frac14j([y,z])x+\frac14 j([x,z])y& \mbox{
for } x,y,z \in \vv, \\ \\
 -\frac14 [x,j(y)z] &  \mbox{ for } x, y \in \vv, \, z\in \zz,\\ \\ 
 -\frac14 [x,j(z) y]+\frac14 [y,j(z) x] & \mbox{ for } x, z \in \vv, \, y\in \zz,\\ \\
 -\frac14 j(y)j(z) x & \mbox{ for } x\in \vv, y, z \in \zz,\\ \\
 \frac14 [j(x),j(y)]z & \mbox{ for } x,y \in \zz, z\in \vv,\\\\
 0 & \mbox{ for } x,y, z\in \zz.
\end{array} \right.
\end{equation}

Let $\Pi\subseteq \nn$ denote a non degenerate plane and let $Q$ be given by
$$Q(x,y)=\la x,x \ra \la y,y\ra-\la x,y \ra^2.$$
 The non degeneracy property is equivalent to ask $Q(v,w)\neq 0$ for one -hence every- basis $v,w\in \Pi$ \cite{ON}.
The sectional curvature of $\Pi$ is the number $K(x,y):=\la R(x,y)y, x\ra
/Q(x,y)$, which is  independent of the choice of the basis. 
Now take a orthonormal
basis for $\Pi$, that is a linearly independent set $\{x,y\}$ such that
$\la x,y\ra=0$ and $\la x, x \ra =\pm 1$ and $\la y, y \ra=\pm 1$. 

After (\ref{cu}) one obtains
\begin{equation}\label{sect}
K(x,y)  = \left\{
\begin{array}{ll}
-\frac{3 \varepsilon_1 \varepsilon_2}4 \la [x,y], [x,y]\ra & \mbox{
for }x,y\in \vv \\ \\
 \frac{\varepsilon_1 \varepsilon_2}4 \la j(y)x, j(y)x \ra & \mbox{
for }x\in \vv, y \in \zz,\\ \\
0 & \mbox{ for }x,y \in \zz
\end{array} \right.
\end{equation}
being $\varepsilon_1:=\la x,x \ra$ and $\varepsilon_2:= \la y, y\ra$.

\vskip 3pt

The Ricci tensor is given by 
$Ric(x,y)={\rm trace}(z \to R(z,x)y), z\in \nn$ for arbitrary elements 
$x,y\in \nn$. 

\begin{prop}\label{p2} Let $\{z_i\}$ denote a orthonormal basis of $\zz$ and $\{v_j\}$ a orthonormal basis of $\vv$. It holds
$$Ric(x,y)=\left\{
\begin{array}{ll}
0 & \mbox{ for }x\in \vv, y\in \zz\\
\\
\frac12 \sum_i \varepsilon_i \la j(z_i)^2 x, y\ra & \mbox{ for } x, y\in \vv,\, \varepsilon_i=\la z_i, z_i\ra\\ \\
-\frac14 \sum_j {\varepsilon}_j \la j(x) j(y) v_j, v_j\ra & \mbox{ for } x,y \in \zz,\, {\varepsilon}_j=\la v_j, v_j\ra.
\end{array} \right.
$$
\end{prop}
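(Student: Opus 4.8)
The plan is to evaluate the endomorphism trace that defines $Ric$ directly from the curvature formula (\ref{cu}). Since $Ric(x,y)=\operatorname{trace}(z\mapsto R(z,x)y)$ is a basis-independent trace, I would compute it on the pseudo-orthonormal basis $\{z_i\}\cup\{v_j\}$ adapted to the orthogonal splitting $\nn=\zz\oplus\vv$, using that for any endomorphism $A$ of $\nn$ one has $\operatorname{trace}(A)=\sum_i\varepsilon_i\langle Az_i,z_i\rangle+\sum_j\varepsilon_j\langle Av_j,v_j\rangle$ with $\varepsilon_i=\langle z_i,z_i\rangle$ and $\varepsilon_j=\langle v_j,v_j\rangle$. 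Thus
$$Ric(x,y)=\sum_i\varepsilon_i\langle R(z_i,x)y,z_i\rangle+\sum_j\varepsilon_j\langle R(v_j,x)y,v_j\rangle,$$
and I would treat the three cases $(x\in\vv,\,y\in\zz)$, $(x,y\in\zz)$ and $(x,y\in\vv)$ in turn, reading each curvature value off (\ref{cu}) (using $R(z,x)=-R(x,z)$ when the first slot sits in $\zz$) and repeatedly applying the defining identity $\langle[u,w],z\rangle=\langle j(z)u,w\rangle$.

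The two outer cases are short. For $x\in\vv$, $y\in\zz$, inspection of (\ref{cu}) shows that every $R(z_i,x)y$ lands in $\vv$ while every $R(v_j,x)y$ lands in $\zz$; since $\langle\zz,\vv\rangle=0$, each summand pairs a vector against the complementary subspace and vanishes, giving $Ric(x,y)=0$. For $x,y\in\zz$ the line of (\ref{cu}) with all arguments in $\zz$ kills the $\zz$-sum, while the line with one $\vv$-argument and two $\zz$-arguments gives $R(v_j,x)y=-\tfrac14 j(x)j(y)v_j$; the $\vv$-sum is then exactly $-\tfrac14\sum_j\varepsilon_j\langle j(x)j(y)v_j,v_j\rangle$, as asserted.

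The substantive case is $x,y\in\vv$, where both sums contribute. For the $\zz$-sum I use $R(z_i,x)y=-R(x,z_i)y=\tfrac14[x,j(z_i)y]$; pairing with $z_i$ and applying the defining identity yields the summand $\tfrac14\langle j(z_i)x,j(z_i)y\rangle$. For the $\vv$-sum I read $R(v_j,x)y$ off the line of (\ref{cu}) with all three arguments in $\vv$; pairing with $v_j$, the $j([x,y])$-term dies (it meets $[v_j,v_j]=0$) and the rest reduces to $\tfrac12\langle[y,v_j],[v_j,x]\rangle+\tfrac14\langle[x,v_j],[v_j,y]\rangle$. The defining identity also forces $\langle j(z)u,w\rangle=-\langle u,j(z)w\rangle$, hence $\langle j(z_i)x,j(z_i)y\rangle=-\langle j(z_i)^2x,y\rangle$. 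Expanding the brackets in the basis $\{z_i\}$ and summing over $j$ through the completeness relation $a=\sum_j\varepsilon_j\langle a,v_j\rangle v_j$ (valid for $a\in\vv$) then collapses each $\sum_j\varepsilon_j\langle[a,v_j],[v_j,b]\rangle$ to $\sum_i\varepsilon_i\langle j(z_i)^2a,b\rangle$. Tallying coefficients, the $\vv$-sum equals $\tfrac34\sum_i\varepsilon_i\langle j(z_i)^2x,y\rangle$ and the $\zz$-sum equals $-\tfrac14\sum_i\varepsilon_i\langle j(z_i)^2x,y\rangle$, so their sum is the claimed $\tfrac12\sum_i\varepsilon_i\langle j(z_i)^2x,y\rangle$.

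The obstacle here is bookkeeping rather than conceptual: one must carry the signs $\varepsilon_k$ correctly through the trace and, above all, not drop the sign produced by the relation $\langle j(z)u,w\rangle=-\langle u,j(z)w\rangle$ when collapsing the $\vv$-sum. It is exactly this sign that makes the $\zz$- and $\vv$-contributions combine to $\tfrac12$ rather than cancel or double. As a sanity check I would recompute the normalization on the low-dimensional Heisenberg example of Example \ref{change}, where $j$ reduces to a single $2\times 2$ block.
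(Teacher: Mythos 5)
Your proposal is correct and follows exactly the route the paper intends: the paper states Proposition \ref{p2} without a written proof, but immediately afterwards records the trace formula $Ric(x,y)=\sum_k\varepsilon_k\la R(e_k,x)y,e_k\ra$ over the adapted pseudo-orthonormal basis, which combined with (\ref{cu}) is precisely your computation. Your case analysis, the use of skew-adjointness of $j(z)$ and of the completeness relation on $\vv$, and the resulting coefficients $0$, $\tfrac34-\tfrac14=\tfrac12$, and $-\tfrac14$ all check out.
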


Due to symmetries of the curvature tensor, the Ricci tensor 
is a symmetric bilinear form on $\nn$ and hence there exists a 
symmetric linear transformation $T:\nn \to \nn$ such that 
$Ric(x,y)=\la Tx,y\ra$ for all $x,y \in \nn$. $T$ is called the Ricci
transformation. Let $\{e_k\}$ denote a orthonormal basis of $\nn$; it holds
$$Ric(x,y)=\sum_k \varepsilon_k \la R(e_k,x)y, e_k \ra =\la -\sum_k
\varepsilon_k R(e_k,x) e_k, y\ra$$
which implies 
\begin{equation} \label{rt}
T(x)= -\sum_k \varepsilon_k R(e_k,x) e_k, \qquad \mbox{ being }
\varepsilon_k=\la e_k, e_k\ra.
\end{equation}
According to the results in (\ref{p2}) we have that $\zz$ and $\vv$ are
$T$-invariant subspaces and 
$$
T(x) = \left\{ \begin{array}{ll}
 \frac12 j(\sum_i
\varepsilon_i z_i)^2 x &  x\in \vv, \quad\varepsilon_i=\la z_i, z_i\ra \\ \\
 \frac14 \sum_j \varepsilon_j
[v_j, j(x)v_j] & x\in \zz \qquad \varepsilon_j=\la v_j, v_j \ra.
\end{array} \right.
$$
where $\{z_i\}$ and $\{v_j\}$ are orthonormal basis of $\zz$ and $\vv$
respectively.

 \begin{rem} The formulas above were used in \cite{C-P1} to prove (\ref{icp}).
 \end{rem}
 
 \begin{rem} For naturally reductive metrics  in the formulas
 above, replace the maps $j$ be the corresponding representation $\pi:\ggo
 \to \sso(\vv, \la\,,\,\ra_{\vv})$.
 \end{rem}
 
 Below we expose examples of naturally reductive metrics on 2-step
 nilpotent Lie groups. This is achieved by translating the data at the Lie
 algebra level to the corresponding simply connected Lie group by
 following  the key results provided in 
 (\ref{t2}). We  shall make use of 
 euclidean and semisimple Lie algebras in order to obtained ad-invariant
 metrics. For further details on Lie algebras with ad-invariant metrics see
 for instance \cite{F-S,M-R}. Concerning isometries between pseudo Riemannian
 2-step nilpotent Lie groups notice that orthogonal isomorphisms gives rise to isometries between the corresponding Lie groups (*).
 
\vskip 3pt 
 
\ri \,{\it Riemannian examples.} Naturally reductive Riemannian nilmanifolds arise by considering a data set with $\ggo$
compact. Recall that if $\ggo$ is compact then $\ggo=\kk \oplus \cc$ where
$\kk=[\ggo, \ggo]$  is a compact semisimple Lie algebra and $\cc$ is the 
center (see \cite{Wa}). In \cite{La} they were extended studied.

In the Riemannian case the converse of (*) above holds \cite{Wi}.

\vskip 2pt

\rii \,{\it Modified Riemannian.} Take any of those data sets corresponding to the positive definite case and  follow the ideas in (\ref{change}). Clearly
all requierements in (\ref{t2}) apply and so one can produce naturally
reductive pseudo Riemannian metrics of signature ($\dim \ggo, \dim \vv)$. 

Let $N(\ggo,\vv)$ denote a Riemannian naturally reductive nilmanifold
obtained from a data set $(\ggo, \vv, \la\,,\,\ra)$. Let
$\tilde{N}(\ggo,\vv)$ denote the pseudo Riemannian 2-step nilpotent Lie
group obtained by changing the sign of the metric on $\ggo$. Therefore by \cite{Wi}
$$N(\ggo,\vv)\simeq N'(\ggo',\vv') \qquad \Longleftrightarrow \qquad \nn(\ggo,\vv)\simeq
\nn(\ggo, \vv')$$
and this occurs if and only if there an isometric isomorphism $\phi:(\ggo,
\la\,,\,\ra_+) \to (\ggo', \la\,,\,\ra_+')$ and a isometry $T:(\vv,
\la\,,\,\ra_+) \to (\vv',\la,,\ra_+)$ such that
$$T\pi(x)T^{-1}=\pi'(\phi x) \qquad \mbox{ for all }x\in \ggo.$$
Clearly $\phi:(\ggo, -\la\,,\,\ra_+) \to (\ggo', -\la\,,\,\ra_+')$ is also a isometric isomorphism, so that the  corresponding simply connected Lie groups are  isometric. Thus one has  what follows. 

\begin{prop} If $N(\ggo,\vv)\simeq N'(\ggo',\vv')$ then $\tilde{N}(\ggo,\vv)\simeq \tilde{N}'(\ggo',\vv')$.
\end{prop}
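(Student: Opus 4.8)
The plan is to transfer the given Riemannian isometry down to the level of metric Lie algebras and then to observe that reversing the sign of the metric on the center affects neither the Lie bracket nor the orthogonality of the transferring map. First I would invoke the equivalence recorded just above the statement, which rests on Wilson's theorem \cite{Wi}: the isometry $N(\ggo,\vv)\simeq N'(\ggo',\vv')$ is the same as an isomorphism of the metric Lie algebras, and hence supplies an isometric isomorphism $\phi\colon(\ggo,\la\,,\,\ra_+)\to(\ggo',\la\,,\,\ra_+')$ of the centers together with a linear isometry $T\colon(\vv,\la\,,\,\ra_+)\to(\vv',\la\,,\,\ra_+')$ intertwining the representations, that is $T\pi(x)T^{-1}=\pi'(\phi x)$ for all $x\in\ggo$.

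Next I would check that this same pair $(\phi,T)$ is an orthogonal isomorphism between the modified metric Lie algebras $\tilde{\nn}(\ggo,\vv)$ and $\tilde{\nn}'(\ggo',\vv')$. By construction (Example \ref{change}) the Lie groups $\tilde{N}$ and $N$ carry the \emph{same} underlying Lie algebra $\nn$, only the metric on the center being negated; therefore $(\phi,T)$ is still a Lie algebra isomorphism $\tilde{\nn}(\ggo,\vv)\to\tilde{\nn}'(\ggo',\vv')$ and the intertwining identity is untouched. On $\vv$ nothing changes, so $T$ remains a linear isometry there. On the center the modified inner products are $-\la\,,\,\ra_+$ and $-\la\,,\,\ra_+'$, and since $\phi$ preserves $\la\,,\,\ra_+$ it preserves its negative as well; hence $\phi$ is still an isometry for the modified metrics on $\ggo$.

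Finally I would conclude by the observation (*): an orthogonal isomorphism of metric Lie algebras induces an isometry of the associated simply connected Lie groups, so $(\phi,T)$ delivers the desired isometry $\tilde{N}(\ggo,\vv)\simeq\tilde{N}'(\ggo',\vv')$. I expect no real obstacle in this argument, since forming $\tilde{N}$ leaves the Lie algebra fixed and merely flips the sign of the nondegenerate metric on the center; the single point deserving a line of care is that negating the center metric on both sides preserves the isometry property of $\phi$ and that the intertwining relation is insensitive to the sign change, both of which are immediate.
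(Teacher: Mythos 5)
Your proposal is correct and follows essentially the same route as the paper: it invokes Wilson's theorem to reduce the isometry $N(\ggo,\vv)\simeq N'(\ggo',\vv')$ to the pair $(\phi,T)$ with the intertwining relation $T\pi(x)T^{-1}=\pi'(\phi x)$, and then observes that $\phi$ remains an isometric isomorphism after negating the metric on the centers, so the same pair yields the isometry of the modified groups. The paper's argument is exactly this, stated in the paragraph preceding the proposition.
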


In \cite{La}  detailed conditions to get the isometries $N(\ggo,\vv)\simeq N'(\ggo',\vv')$ were obtained.

\vskip 2pt

\riii \, {\it Abelian center.} Let $\RR^{2n}$ be equipped with a metric $B$, that is, $B$ is determined by a non singular symmetric linear map such that 
$$B(x,y)=\la b x, y\ra \qquad \la \,,\,\ra \mbox{ the canonical inner product on } \RR^{2n}.$$
 Let $t\in \sso(\RR^{2n}, B)$, that is $t$ may satisfy $t^*=-btb$ where $t^*$ denotes adjoint with respect to the canonical inner product on $\RR^{2n}$.
 
 Any non singular $t\in \sso(\RR^{2n}, B)$ gives rise to a faithful 
 representation of $\RR$ to $(\RR^{2n},B)$ without trivial 
 subrepresentations. Let $\nn$ be the vector space direct sum 
 $\RR z \oplus \RR^{2n}$ equipped with a metric $\la\,,\,\ra$ such that 
 $$\la z, \RR^{2n}\ra=0\qquad \la z, z \ra= \lambda\in \RR-\{0\} \qquad 
 \la\,,\,\ra_{\RR^{2n}}=B.$$
  Define a Lie bracket on $\nn$  by
 $$[z, y]=0\qquad \forall y\in \nn\qquad \mbox{ and }\quad \la [u,v], z\ra= B(t u,v) \quad u,v\in \RR^{2n}.$$
 According to (\ref{t2}) this Lie bracket makes of $\nn$ a 2-step nilpotent Lie algebra and the given metric is naturally reductive whenever the center is non degenerate. This Lie algebra is isomorphic to the Heisenberg Lie algebra.  
 
 Furthermore, the group of isometries  fixing the identity element has Lie algebra
 \begin{equation} \label{ih} \hh=\mathcal Z_{\sso(\RR^{2n}, B)} (t)
 \end{equation}
 where $\mathcal Z_{\sso(\RR^{2n}, B)} (t)$ denotes the centralizer of $t$
  in $\sso(\RR^{2n},B)$, which can be verified by applying Proposition (\ref{pi}). 
 
 In this way one gets naturally reductive metrics on the Heisenberg
  Lie group of dimension 2n+1. The converse also holds.
 
\begin{prop} Any left invariant   pseudo Riemannian metric on the Heisenberg
Lie group $H_{2n+1}$ for which
the center is non degenerate is naturally reductive. 

The isotropy group has Lie algebra $\hh$ as in (\ref{ih}). 
\end{prop}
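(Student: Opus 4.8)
The plan is to reduce everything to the characterization of Theorem \ref{t1}, whose hypotheses turn out to be automatic for a one dimensional center. First I would apply Proposition \ref{p1}: since the metric is left invariant with non degenerate center, the Heisenberg Lie algebra decomposes orthogonally as $\nn = \zz \oplus \vv$ with $\vv = \zz^{\perp}$, and the bracket is encoded by a linear map $j \colon \zz \to \sso(\vv, \la\,,\,\ra_{\vv})$ through (\ref{br}). The point to keep in mind throughout is that $H_{2n+1}$ has one dimensional center $\zz = \RR z$, so that $j$ is governed by the single operator $j(z)$.

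Before invoking Theorem \ref{t1} I must check that $j$ is injective. Since $H_{2n+1}$ is nonabelian its derived algebra exhausts the center, $[\vv,\vv] = [\nn,\nn] = \zz$; hence if $j(z') = 0$ for some $z' \in \zz$ then $\la [u,v], z'\ra = \la j(z') u, v\ra = 0$ for all $u,v \in \vv$, which forces $\la \zz, z'\ra = 0$. Non degeneracy of the metric on the one dimensional space $\zz$ then gives $z' = 0$, so $j$ is injective and Theorem \ref{t1} applies.

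Now the two conditions of Theorem \ref{t1} follow from the one-dimensionality of $\zz$. Condition (i) is immediate, as $j(\zz) = \RR\, j(z)$ is one dimensional and hence an abelian subalgebra of $\sso(\vv, \la\,,\,\ra_{\vv})$. For condition (ii) I would use that the orthogonal Lie algebra of a one dimensional non degenerate space is trivial, $\sso(\zz, \la\,,\,\ra_{\zz}) = \{0\}$; any $x,y \in \zz$ are multiples of $z$, so $[j(x), j(y)] = 0$, and taking $\tau \equiv 0 \in \sso(\zz)$ yields $[j(x), j(y)] = 0 = j(\tau_x y)$. With (i) and (ii) in hand, Theorem \ref{t1} gives that $(H_{2n+1}, \la\,,\,\ra)$ is naturally reductive with respect to $G = H \ltimes N$.

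It remains to identify the isotropy algebra. Reading off (\ref{sd}), $\hh = \{(A,B) \in \sso(\zz,\la\,,\,\ra_{\zz}) \times \sso(\vv,\la\,,\,\ra_{\vv}) : [B, j(x)] = j(Ax)\}$. Because $\sso(\zz) = \{0\}$ we must have $A = 0$, so the defining relation collapses to $[B, j(z)] = 0$; thus $\hh$ is precisely the centralizer of $j(z)$ in $\sso(\vv, \la\,,\,\ra_{\vv})$. Under the identification of $(\vv, \la\,,\,\ra_{\vv})$ with $(\RR^{2n}, B)$ and of $j(z)$ with the operator $t$ of the abelian-center construction, this is $\mathcal Z_{\sso(\RR^{2n}, B)}(t)$, which is (\ref{ih}). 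I do not foresee a real obstacle: the entire argument rests on the single observation that a non degenerate one dimensional center renders the bracket conditions of Theorem \ref{t1} vacuous, and the only step needing a word of care is the injectivity of $j$, which is exactly what allows Theorem \ref{t1} to be applied.
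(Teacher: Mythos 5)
Your proof is correct and follows essentially the same route as the paper: the only real difference is that you verify the two conditions of Theorem \ref{t1} directly (both rendered trivial by the one-dimensional center), whereas the paper packages the same facts as a data set and cites Theorem \ref{t2}, but the substantive step — injectivity of $j$ from non-degeneracy of the center together with $[\nn,\nn]=\zz$ — is identical in both. Your explicit derivation of the isotropy algebra from (\ref{sd}), forcing $A=0$ because $\sso(\zz,\la\,,\,\ra_{\zz})=\{0\}$ in dimension one, is a welcome addition, since the paper's proof leaves that part to a citation of Proposition \ref{pi}.
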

\begin{proof}
Let $\hh_{2n+1}$
denote the Lie algebra of $H_{2n+1}$ and decompose it as a orthogonal direct sum
$\hh_{2n+1}=\RR z \oplus \vv$. Then the restriction of the metric to $\vv$ defines a metric $B$ of signature $(k,m)$. 
The map $j$ defined in
(\ref{br}) is indeed self adjoint  with respect to
$B:=\la\,,\,\ra_{|_{\vv\times\vv}}$ and it generates a subalgebra of 
 $\sso(\vv, B)$. Thus $z \to j(z)$ defines a faithful representation 
 without trivial subrepresentations since by $t:=j(z)$ one has
 $$tu=0 \Longleftrightarrow B( tu, v)=0 \quad \forall v\in \vv \Longleftrightarrow B(z,[u,v])=0 \quad \forall v\in \vv.$$
 But since the center is non degenerate then $[u,v]=0$ for all $v\in \vv$ which implies $u=0$.
 Indeed any non degenerate  metric on $\RR z$  is ad-invariant. 
Hence the statements of (\ref{t2}) are satisfied and the metric on $\hh_{2n+1}$ is naturally reductive. 
\end{proof}

\begin{exa} Let $\hh_3$ denote the Heisenberg Lie algebra of dimension three
with basis $e_1, e_2, e_3$ satisfying the Lie brackets $[e_1, e_2]=e_3$.
 Lorentzian metrics on $\hh_3$ with non degenerate center can be defined by
 $$\begin{array}{lrclcl}
 (1) & -\la e_3, e_3\ra & = & 1 & = & \la e_1, e_1\ra=\la e_2, e_2\ra \\
 (2) & \la e_3, e_3\ra & = & 1 & = & -\la e_1, e_1\ra=\la e_2, e_2\ra 
 \end{array}
 $$
 Thus in the basis $e_1,
 e_2$ the map $j_1(e_3)$ for the metric in (1) is represented  by the matrix
 $$\left( \begin{matrix} 0 & 1 \\ -1 & 0 \end{matrix} \right)$$
 (compare with (\ref{change})) while $j_2(e_3)$ for the metric (2) one has
 $$\left( \begin{matrix} 0 & 1 \\ 1 & 0 \end{matrix} \right).$$
 
 See \cite{Ge} for more results concerning Lorentzian metrics.
 \end{exa}

The construction on the Heisenberg Lie algebra, can be extended in the following
way.  Set $B$ a non degenerate symmetric bilinear form on $\RR^k$ and let $t_1,
\hdots t_l$ be commuting linear maps in $\sso(\RR^k, B)$ and such that
$\bigcap_i ker(t_i)=\{0\}$.

Set $\nn=\RR^l \oplus \RR^k$ direct sum of vector spaces, equipped $\RR^l$ with
any metric and $\nn$ with the product metric such that $\la \RR^l, \RR^k\ra=0$. 

The triple $(\RR^l, \RR^k, \la\,,\,\ra)$ is a data set which induces a naturally
reductive metric on the corresponding simply connected 2-step nilpotent Lie 
group with Lie algebra $\nn$.

\vskip 2pt \, {\it Semisimple center.} Let $\RR^{p,q}$ denote the real vector
space $\RR^{p+q}$ endowed with a metric $\la\,,\ra_{p,q}$ of signature $(p,q)$.
Let $\sso(p,q)$ denote the set of self adjoint transformations for
$\la\,,\ra_{p,q}$. This a semisimple Lie algebra and the Killing form $K$ a
natural ad-invariant metric on $\sso(p,q)$. Indeed $\sso(p,q)$ acts on 
$\RR^{p,q}$ just by evaluation. Take the direct sum as vector spaces 
$\nn=\sso(p,q)\oplus \RR^{p,q}$ and equipped with the product metric
$\la\,,\,\ra_{\nn}$ such that $\la\,,\,\ra_{\sso(p,q)\times \sso(p,q)}=K$, 
$\la\,,\,\ra_{\RR^{p,q}\times \RR^{p,q}}=\la\,,\,\ra_{p,q}$ and $\la\sso(p,q), \RR^{p,q}\ra=0$.
Thus  a Lie bracket can be defined on $\nn$ by
$$K( [u,v], A )=\la A u, v\ra_{p,q} \qquad \quad \mbox{ for all }u,v \in
\RR^{p,q}, A\in \sso(p,q).$$
The corresponding 2-step nilpotent Lie group equipped with the left invariant
metric induced  by the metric above, makes of $N$ a naturally reductive pseudo
Riemannian space (Theorem (\ref{t2}). 

A similar construction can be done by restriction of the evaluating action to  a non degenerate subalgebra of
$\sso(p,q)$. 

\vskip 2pt

\rv \, {\it Modified tangent semisimple.} The Killing 
form $K$ is an ad-invariant metric on any semisimple Lie algebra $\ggo$. 
As usual the tangent Lie algebra $\ct \ggo$ is the semidirect product $\ggo \ltimes
\ggo$ via the adjoint representation. 
We shall modify the algebraic structure on $\ct  \ggo$ in order to get a 
naturally reductive pseudo Riemannian 2-step nilpotent Lie group. 

Take the Lie algebra $\ggo$ together with the Killing form and let $\vv$
denote the underlying vector space to $\ggo$ endowed also with the Killing
form metric. To this pair $(\ggo, \vv)$ attach 

- the  metric given by $\la\,,\,\ra_{\ggo}=\la\,,\,\ra_{\vv}=K$ and
$\la\ggo,\vv\ra=0$; 

- the adjoint representation $\ad:\ggo \to \sso(\vv, K)$.

The adjoint representation is faithful and there 
is no trivial subrepresentations, so that $(\ggo, \vv, K+K)$ constitues a
data set for a 2-step nilpotent Lie group $N(\ggo, \vv)$ and by (\ref{t2})
it is naturally reductive pseudo Riemannian. Clearly the signature of this metric is twice
as much  the signature of $B$ and the isometry group can be computed with
(\ref{coro}).

Notice that whenever $\ggo$ is compact the procedure above is a case of
the construction for naturally reductive Riemannian nilmanifolds (see
(i)).

In the next section we shall see that the 2-step nilpotent Lie algebra above,
together with another metric gives rise to a Lie algebra carrying an
ad-invariant metric. 

\vskip 2pt

\section{Other examples of  naturally reductive metrics}

In this section we study 2-step nilpotent Lie algebras with ad-invariant metrics. The corresponding Lie group carries a bi-invariant metric for which the center is degenerate. 
  
 An {\em ad-invariant metric} on a Lie algebra $\ggo$ is a non degenerate symmetric
 bilinear map $\la\,,\,\ra:\ggo \times \ggo \to \RR$ such that 
\begin{equation}
\la [x,y], z\ra + \la y, [x,z]\ra =0 \qquad \mbox{ for all } x,y, z \in \nn.
\end{equation}
Recall that on a connected Lie group $G$ furnished with a left invariant pseudo
Riemannian metric $\la\,,\,\ra$, the following  statements are equivalent
(see \cite{ON} Ch. 11):

\begin{enumerate}
\item $\la\,,\,\ra$ is right invariant, hence bi-invariant;
\item $\la\,,\,\ra$ is $\Ad(G)$-invariant;
\item the inversion map $g\to g^{-1}$ is an isometry of $G$;
\item $\la [x,y], z\ra + \la y, [x,z]\ra =0$ for all $x,y, z \in \ggo$;
\item $\nabla_xy =\frac12 [x,y]$ for all $x,y\in \ggo$, where $\nabla$ denotes the
Levi Civita connection;
\item the geodesics of $G$ starting at $e$ are the one parameter subgroups of $G$.
\end{enumerate}

 Clearly $(G, \la\,,\,\ra)$ is naturally
reductive, which by (3)   is a symmetric space.  Furthermore one has

\begin{itemize}

\item the Levi-Civita connection is given by 
$$\nabla_x y=\frac 12 [x,y] \qquad \mbox{ for 
all } x,y \in \ggo,$$

\item the curvature tensor is 
$$R(x,y)=\frac14 \ad([x,y])\qquad \mbox{ for }x,y \in \ggo.$$
\end{itemize}

Hence any  {\em simply connected 2-step nilpotent Lie group equipped with a 
bi-invariant  metric is flat}.

The set of nilpotent Lie groups carrying a bi-invariant pseudo Riemannian metric is  non empty.  An element of this set is for instance the simply connected Lie group
whose Lie algebra is the free 3-step
nilpotent Lie algebra in two generators: in fact,
$\nn$ the Lie algebra spanned as vector space by $e_1, e_2, e_3, e_4, e_5$ with the non zero Lie brackets
$$[e_1, e_2]=e_3\qquad [e_1, e_3]=e_4\qquad [e_2, e_3]=e_5, $$
 carries the ad-invariant metric defined by the non vanishing symmetric relations 
$$\la e_3, e_3\ra=1=\la e_1, e_5\ra=-\la e_2, e_4\ra.$$
 
 Otherwise  in the Riemannian case, a naturally reductive  nilpotent Lie group may be at most 2-step nilpotent \cite{Go}.
 
 \vskip 3pt
 
 Let $\nn$ denote a 2-step nilpotent Lie algebra  with an
 ad-invariant metric $\la\,,\,\ra$. It is not hard to prove that $\zz^{\perp}=C(\nn)$ and
 therefore the center is always an isotropic ideal. Moreover $\nn$ decomposes as a
 orthogonal product 
 \begin{equation}\label{desad} \nn=\tilde{\zz} \times \tilde{\nn}
 \end{equation}
 where $\tilde{\zz}$ is a non degenerate central ideal and $\tilde{\nn}$ is a 2-step
 nilpotent ideal of corank zero, being  the corank of $\nn$ uniquely defined by the scalar $k:=\dim
 \zz -\dim C(\nn)$.  This follows essentially from the fact that the ad-invariant metric is non
 degenerate on any complementary subspace of $C(\nn)$ in $\zz$. Thus by choosing
 such a complement $\tilde{\zz}$, $\zz=\tilde{\zz}\oplus C(\nn)$ and 
 its orthogonal complement in $\nn$, $\nn=\tilde{\zz}\oplus \tilde{\zz}^{\perp}$ one gets a
 decomposition as above (\ref{desad}) with $\tilde{\nn}:=\tilde{\zz}^{\perp}$.
 
 Assume now the corank of $(\nn, \la\,,\,\ra)$ vanishes, so that
 $\zz^{\perp}=C(\nn)=\zz$. One can produce a isotropic subspace $\vv_1$ such that
 the ad-invariant metric on $\zz\oplus \vv_1$ is non degenerate. Hence one
 obtains a orthogonal decomposition as vector spaces 
 $$\nn=(\zz \oplus \vv_1)\oplus \vv_2, \qquad\mbox{ where }\vv_2=(\zz\oplus
 \vv_1)^{\perp}.$$
We claim $\vv_2=0$. In fact  for all $x\in \vv_2$ one has $\la x, [u,v]\ra=0$ for
all $u,v\in \nn$ 
implying that $x\in C(\nn)^{\perp}\cap \vv_2=\{0\}$. Hence there is a 
splitting of $\nn$ as a direct sum of the isotropic subspaces $\zz$ and $\vv$
so that the metric on $\nn$ is neutral:
$$\nn=\zz \oplus \vv.$$

Among other possible constructions, 2-step nilpotent Lie algebras admitting an
ad-invariant metric can be obtained as follows. Let $(\vv, \la \,,\, \ra_+)$ denote
a real vector space equipped with an inner product and let $\rho:\vv \to \sso(\vv)$
an injective linear map satisfying 
\begin{equation}\label{jad}
\rho(u)u=0\qquad\mbox{  for all }u\in \vv.
\end{equation}
 Consider the
vector space $\nn:=\vv^*\oplus \vv$ furnished with the canonical neutral metric
$\la\,,\,\ra$ and
define a Lie bracket on $\nn$ by
\begin{equation}\label{brad}
\begin{array}{rcl}
[x,y] & = & 0 \quad \mbox{ for  }x\in \vv^*, y\in \nn\quad \mbox{ and }\quad 
[\nn,\nn]\subseteq \vv^*\\
\la [u,v], w\ra & = & \la \rho(w) u,v\ra_+ \qquad \mbox{ for all } u,v,w\in \vv.
\end{array}
\end{equation}
 Then $\nn$ becomes a 2-step nilpotent Lie algebra of
corank zero for which the metric $\la\,,\,\ra$ is ad-invariant. This construction was called the
{\em modified cotangent}, since $\nn$ is linear isomorphic to the cotangent of $\vv$.
Notice that the commutator coincides with the center and it equals $\vv^*$. This
allows to construct 2-step nilpotent Lie algebras of null corank which carry an ad-invariant metric.  Furthermore this is basically
the way to obtain  such Lie algebras, see \cite{Ov}:

\begin{thm} \label{mod}  Let $(\nn, \la\,,\,\ra)$ denote a 2-step nilpotent Lie algebra of corank
$m$ endowed with an ad-invariant metric. Then $(\nn, \la\,,\,\ra)$ is isometric
isomorphic to an orthogonal direct product of the Lie algebras $\RR^m$ and a
modified cotangent.
\end{thm}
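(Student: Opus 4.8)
The plan is to prove Theorem \ref{mod} by first reducing to the corank-zero case using the decomposition \eqref{desad} established earlier in this section, and then showing that any corank-zero 2-step nilpotent Lie algebra with an ad-invariant metric is isometric isomorphic to a modified cotangent. Since we already know $\nn = \tilde{\zz} \times \tilde{\nn}$ orthogonally with $\tilde{\zz}$ a non-degenerate central ideal of dimension $m$ (the corank) and $\tilde{\nn}$ a 2-step nilpotent ideal of corank zero, the factor $\tilde{\zz}$ contributes the $\RR^m$ summand. It remains to identify $\tilde{\nn}$ with a modified cotangent, so I would set $\nn := \tilde{\nn}$ and assume from the outset that the corank vanishes, i.e.\ $\zz = C(\nn) = \zz^{\perp}$, with the metric neutral as shown above.

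First I would fix the Witt-type splitting $\nn = \zz \oplus \vv$ into isotropic subspaces already produced in the corank-zero discussion, where $\vv$ is isotropic, $\zz = C(\nn)$ is isotropic, and the metric pairs $\zz$ with $\vv$ non-degenerately. This non-degenerate pairing gives a canonical linear isomorphism $\zz \simeq \vv^*$, sending $z \in \zz$ to the functional $v \mapsto \la z, v\ra$; under this identification I would realize $\nn$ as $\vv^* \oplus \vv$ as required. Next I would transport an inner product to $\vv$: choosing any maximal isotropic complement fixes a positive-definite form $\la\,,\,\ra_+$ on $\vv$, and I would define $\rho : \vv \to \End(\vv)$ by the formula $\la \rho(w)u, v\ra_+ = \la [u,v], w\ra$, reading off the bracket structure through the pairing. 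The key points to verify are that $\rho(w)$ is skew-symmetric for $\la\,,\,\ra_+$, which follows from the skew-symmetry of the Lie bracket, and that $\rho$ is injective, which follows from corank zero (if $\rho(w)=0$ then $w$ pairs trivially with all of $[\vv,\vv]=C(\nn)=\zz$, forcing $w=0$). Condition \eqref{jad}, namely $\rho(u)u=0$, encodes precisely the ad-invariance of the metric: expanding $\la[u,v],w\ra + \la v, [u,w]\ra = 0$ and using the defining formula for $\rho$ translates this cocycle identity into the vanishing $\la \rho(u)u, v\ra_+ = 0$ for all $v$, hence $\rho(u)u = 0$.

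With $\rho$ in hand, I would check that the bracket reconstructed via \eqref{brad} agrees with the original bracket on $\nn$ and that the canonical neutral metric on $\vv^* \oplus \vv$ matches $\la\,,\,\ra$ under the identification $\zz \simeq \vv^*$. Both verifications are essentially bookkeeping through the non-degenerate pairing, since the bracket is determined by its pairing with $\zz$ and the metric is determined by the pairing together with the isotropy of $\zz$ and $\vv$. The isometric isomorphism of $\nn$ with the modified cotangent then follows, and combining with the $\tilde{\zz} = \RR^m$ factor yields the stated orthogonal direct product decomposition.

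The main obstacle I anticipate is not any single computation but rather the careful tracking of the two identifications simultaneously: the algebraic identification $\zz \simeq \vv^*$ via the metric pairing, and the auxiliary choice of a positive-definite $\la\,,\,\ra_+$ on $\vv$ used to present $\rho$ as landing in $\sso(\vv)$. One must confirm that the ad-invariance condition \eqref{jad} is genuinely equivalent to $\rho(u)u=0$ and not merely implied in one direction, and that the construction is independent (up to the stated isometric isomorphism) of the choices of isotropic complement and inner product. The cleanest route is to cite the structure result from \cite{Ov} for the hard direction, but a self-contained argument requires verifying that \eqref{brad} and the neutral metric genuinely recover $(\nn, \la\,,\,\ra)$, which is where the bulk of the routine but delicate verification lies.
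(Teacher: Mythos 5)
Your argument is correct, and it is essentially the intended one: the paper itself does not prove Theorem \ref{mod} but only assembles the ingredients --- the orthogonal decomposition (\ref{desad}) into a nondegenerate central factor $\tilde{\zz}$ of dimension $m$ and a corank-zero ideal, and the neutral splitting $\nn=\zz\oplus\vv$ into isotropic subspaces in the corank-zero case --- and then defers the identification with a modified cotangent to \cite{Ov}. Your proposal completes that step in the natural way: since $\zz=C(\nn)=\zz^{\perp}$ is maximal isotropic and $\vv$ is an isotropic complement, the nondegenerate pairing identifies $\zz$ with $\vv^*$, and because both summands are isotropic the ambient metric reduces to the dual pairing, i.e.\ the canonical neutral metric on $\vv^*\oplus\vv$; transporting the bracket through an auxiliary inner product on $\vv$ yields $\rho$ with $\la\rho(w)u,v\ra_+=\la [u,v],w\ra$, whose skew-symmetry is the antisymmetry of the bracket, whose injectivity is $C(\nn)^{\perp}\cap\vv=\zz\cap\vv=\{0\}$, and for which (\ref{jad}) is equivalent to ad-invariance (the only nonvacuous instance of ad-invariance has all three arguments in $\vv$, since $\zz$ is central and isotropic; one direction is the specialization $w=v$, the other follows by polarizing $\rho(u)u=0$ and using skewness). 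Two small remarks. First, the phrase ``choosing any maximal isotropic complement fixes a positive-definite form on $\vv$'' is off: $\vv$ is itself isotropic, so the ambient metric induces nothing useful on it; you simply choose an arbitrary auxiliary inner product $\la\,,\,\ra_+$ on $\vv$, and any choice works. Second, the independence of the construction from the choices of $\vv$ and $\la\,,\,\ra_+$, which you list as a worry, is not actually needed: the theorem asserts only that $(\nn,\la\,,\,\ra)$ is isometrically isomorphic to \emph{some} product $\RR^m\times(\vv^*\oplus\vv)$, so exhibiting one presentation suffices.
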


 One can get 2-step nilpotent examples by proceding as follows. Let $(\ggo, B)$ denote a compact
semisimple Lie algebra and $B$ its Killing form. Since  $B$ is negative
definite on $\ggo$, $-B$ determines an inner product
on $\ggo$. The adjoint map, $\ad:\ggo \to \sso(\ggo, B)$ satisfies
(\ref{jad}), therefore the vector space $\ggo^* \oplus \ggo$ equipped with the
Lie bracket defined in (\ref{brad}) makes of $\ggo^*\oplus \ggo$ a 2-step
nilpotent Lie algebra which carries an ad-invariant metric, the usual
neutral metric on $\ggo^*\oplus \ggo$.

From (\ref{jad}) it is clear that the non singular Lie algebras cannot carry an 
ad-invariant metric. Note that if a 2-step nilpotent Lie algebra admits and
ad-invariant metric, then $\dim \nn -\dim \zz=\dim C(\nn)$. This condition is
however not sufficient. 

A  self adjoint derivation $\phi$  in such a Lie algebra of zero corank has the form
$$\begin{array}{rcll}
\phi(z)& = &-A^*z\in \vv^*& \mbox{ for } z\in \vv^*\\
 \phi(v)& = & B v + A v &  \mbox{ where }Bv\in \vv^*, \,
Av\in \vv, \,\mbox{ for } v\in \vv
\end{array}
$$
and such that $A^*$ denotes the dual map of $A$: $A^*\varphi= \varphi \circ A$ for
$\varphi \in \vv^*$. On the other hand according to the results in \cite{Mu}  the isotropy group of isometries fixing the
identity element on the corresponding 2-step nilpotent Lie group consists of the self adjoint transformations with respect to $\la\,,\,\ra$. Thus
$$I_a(N)\subseteq I(N).$$

 Examples of 2-step nilpotent Lie algebras with ad-invariant metrics arise
  by taking $\ct^*\nn$, the cotangent of any 2-step
nilpotent Lie algebra $\nn$ together with the canonical neutral metric (see
(\ref{exad})). Let $\nn=\zz\oplus \vv$ denote a 2-step nilpotent Lie
algebra, where $\vv$ is any complementary subspace of $\zz$ in $\ggo$. Let $z_1, \hdots, z_m$ be a basis of the center $\zz$ and let $v_1,
\hdots, v_n$  be a  basis of the vector space $\vv$. Thus
$$[v_i, v_j]=\sum_{s=k}^m c_{ij}^s z_s \qquad \quad i, j=1, \hdots n.$$ 
Let $\ct^*\nn=\nn\ltimes \nn^*$ denote the cotangent Lie algebra obtained via
the coadjoint representation. Indeed  
 $z^1, \hdots, z^m, v^1,
\hdots, v^n$ becomes the  dual basis of the basis above adapted to the
decomposition $\nn^*=\zz^*\oplus \vv^*$. The non trivial Lie bracket relations concerning
the coadjoint action follow
$$
[v_i, z^j]=\sum_{s=1}^n d_{ij}^s v^s \qquad \quad \mbox{ for } i=1, \hdots n, j=1, \hdots m.
$$
Thus 
$[v_i, z^j](v_k)= d_{ij}^k$ and by the definition 
$$
[v_i, z^j](v_k) =-z^j(\sum_{s=1}^m c_{ik}^s z^s)= - c_{ik}^j \qquad \quad 
i,k=1, \hdots n, j=1, \hdots m.
$$
Therefore $d_{ij}^k= - c_{ik}^j$ for $i,k=1, \hdots n, j=1, \hdots m$.

It is clear that if for some basis of $\nn$ the structure constants are rational numbers then by choosing the union of this basis and its dual on $\ct^*\nn$ one gets
rational structure constants for $\ct^*\nn$. Thus by the Mal'cev criterium $N$
and  its cotangent $\ct^*N$, the simply connected Lie group with Lie algebra $\ct^*\nn$, admits
a lattice  which induces   a compact quotient  (see \cite{O-V,Ra} for instance).
 
Let $\Gamma\subset \ct^*N$ denote a cocompact lattice of $\ct^*N$. Indeed
$\ct^*N$ acts on the compact nilmanifold $(\ct^*N)/\Gamma$ by left translation isometries if we
induce to the quotient the bi-invariant metric corresponding to the neutral canonical one on
$\ct^*\nn$. The tangent
space at the representative $e$ can be identified with $\ct^*\nn \simeq T_e((\ct^*N)/\Gamma)$ so that
$\ct^*\nn=\{0\}\oplus \ct^*\nn$ and clearly $Ad(\Gamma)\ct^*\nn\subseteq
\ct^*\nn$ which says that $(\ct^*N)/\Gamma$ is homogeneous reductive. Moreover 
 the induced  metric on the quotient  satisfies
$$\la[x,y],z\ra+ \la [x,z], y\ra=0\qquad \quad \forall x,y, z\in \ct^*\nn.$$

\begin{prop} Let $N$ denote a 2-step nilpotent Lie group. If it admits a
cocompact lattice then  the cotangent Lie group $\ct^*N$ admits a cocompact
lattice $\Gamma$ such that $(\ct^*N)/\Gamma$ is pseudo Riemannian naturally reductive.
\end{prop}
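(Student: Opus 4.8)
The plan is to assemble the observations recorded in the paragraphs preceding the statement into a single argument resting on the Mal'cev rationality criterion. First I would unpack the hypothesis: since $N$ is a simply connected $2$-step nilpotent Lie group admitting a cocompact lattice, the Mal'cev criterion (see \cite{O-V,Ra}) guarantees that $\nn$ possesses a basis with rational structure constants. Concretely, I would fix a basis $z_1,\dots,z_m$ of the center $\zz$ and a complementary basis $v_1,\dots,v_n$ of $\vv$ such that $[v_i,v_j]=\sum_s c_{ij}^s z_s$ with all $c_{ij}^s\in\QQ$.

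Second, I would pass to the cotangent $\ct^*\nn=\nn\ltimes\nn^*$ and observe that the union of this basis with its dual basis $z^1,\dots,z^m,v^1,\dots,v^n$ of $\nn^*$ is again a basis with rational structure constants. Indeed, the only new brackets are the coadjoint ones, and the computation carried out above gives $d_{ij}^k=-c_{ik}^j$; since the $c_{ik}^j$ are rational, so are the $d_{ij}^k$, while all remaining brackets vanish. Hence $\ct^*\nn$ admits a rational structure, and a second application of the Mal'cev criterion produces a cocompact lattice $\Gamma\subset\ct^*N$ in the simply connected nilpotent Lie group $\ct^*N$ whose Lie algebra is $\ct^*\nn$.

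Third, I would equip $\ct^*N$ with the bi-invariant metric induced by the neutral ad-invariant metric of Example \ref{exad}. As this metric is left invariant and left translations by elements of $\Gamma$ are isometries, it descends to a pseudo Riemannian metric on the compact quotient $(\ct^*N)/\Gamma$. The group $\ct^*N$ acts transitively on this quotient by isometries with discrete isotropy $\Gamma$, so the reductive complement may be taken to be the entire Lie algebra $\mm=\ct^*\nn$; then $\Ad(\Gamma)\,\ct^*\nn\subseteq\ct^*\nn$ holds trivially, as already noted, and the isotropy Lie algebra is $\{0\}$.

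Finally, natural reductivity reduces to the identity $\la [x,y]_{\mm},z\ra+\la y,[x,z]_{\mm}\ra=0$ for all $x,y,z\in\ct^*\nn$, which is precisely the ad-invariance of the chosen metric and therefore holds. The step I expect to demand the most care is the second one: checking that adjoining the dual basis genuinely preserves rationality of the structure constants, so that Mal'cev may indeed be applied to $\ct^*N$, since this is the hinge that transports the lattice from $N$ to its cotangent. By contrast, the remaining geometric content — that a bi-invariant metric descends to the quotient and yields natural reductivity with trivial isotropy Lie algebra — is essentially immediate from ad-invariance.
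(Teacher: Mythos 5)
Your proposal is correct and follows essentially the same route as the paper: Mal'cev's criterion applied in both directions, the rationality of the coadjoint structure constants $d_{ij}^k=-c_{ik}^j$ to transport the lattice to $\ct^*N$, and natural reductivity of the compact quotient obtained from the ad-invariance of the canonical neutral metric with reductive complement $\mm=\ct^*\nn$ and trivial isotropy algebra. No gaps.
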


\begin{exa} The low dimensional 2-step nilpotent Lie group $N$ admitting an 
ad-invariant metric occurs in dimension six. 
 This Lie algebra can be also be described as the cotangent of the Heisenberg Lie algebra $\ct^*\hh_3$. Explicitly let $e_1,e_2,e_3, e_4, e_5, e_6$ be a basis of $\nn$; the Lie brackets are
$$[e_4,e_5]=e_1 \qquad [e_4,e_6]=e_2 \qquad [e_5,e_6]=e_3$$
and the ad-invariant metric is defined by the non zero symmetric relations
$$1 = \la e_1, e_6\ra =\la e_2, e_5\ra =\la e_3, e_4\ra.$$

The corresponding simply connected six dimensional Lie group $N$ can be modelled on $\RR^6$ together with  the multiplication group  given by
$$\begin{array}{rcl}
(x_1,x_2,x_3, x_4,x_5,x_6) \cdot (y_1, y_2,y_3,y_4,y_5,y_6) &  = & (x_1 + y_1 + \frac12(x_4y_5-x_5y_4), \\
&& x_2+y_2+\frac12(x_4y_6-x_6y_4),\\
& &  x_3+y_3+\frac12(x_5y_6-x_6y_5),\\
&& x_4+y_4, x_5+y_5, x_6+y_6).
\end{array}
$$
By the Malcev criterium $N$ admits a cocompact lattice $\Gamma$. 
 By inducing the bi-invariant metric of $N$ to $N/\Gamma$ one gets a 
 invariant metric on  $N/\Gamma$, and in this way $N/\Gamma$ is
 a pseudo Riemannian naturally reductive compact nilmanifold.

For instance the subgroup of $N$ given by
$$\Gamma=\{(k_1,k_2,k_3, 2k_4, k_5, 2k_6)\,/\mbox{ for }\,k_i\in \ZZ \, \forall i=1,2,3,4,5,6\}$$
is a co-compact lattice of $N$, so that $N/\Gamma$ is a 
compact homogeneous manifold.

\end{exa}

\

{\sc Acknoledgements.} The author expresses her deep gratitud to Victor Bangert, for his support during the stay of the author at the Universit\"at Freiburg, where this work was done.

The author is partially supported by DAAD, CONICET, ANPCyT and  Secyt - Universidad Nacional de C\'ordoba.

\end{document}